\documentclass[a4paper]{article}

\usepackage[utf8]{inputenc}
\usepackage[T1]{fontenc}

\usepackage{amsmath}
\usepackage{amssymb}

\usepackage{xcolor}
\usepackage{colortbl}
\definecolor{hage}{rgb}{0.4,0.6,1}

\usepackage{hyperref}

\hypersetup{
    colorlinks = false,
    citebordercolor=black,
    linkbordercolor=hage,
}

\setlength{\textwidth}{14cm} \setlength{\textheight}{22cm}
\setlength{\oddsidemargin}{1cm} 



\newcommand{\R}{\mathbb{R}}
\newcommand{\N}{\mathbb{N}}
\newcommand{\C}{\mathbb{C}}

\newcommand{\Ac}{\mathcal{A}}

\newcommand{\Pc}{\mathcal{P}}
\newcommand{\Lc}{\mathcal{L}}

\newcommand{\rk}{{\rm rk\,}}

\newcommand{\ran}{{\rm ran\,}}

\newcommand{\rank}{{\rm rank\,}}

\newcommand{\im}{{\rm Im\,}}

\usepackage[amsmath,amsthm,thmmarks]{ntheorem}


\theoremstyle{plain}
\newtheorem{defi}{Definition}[section]
\newtheorem{theorem}[defi]{Theorem}
\newtheorem{proposition}[defi]{Proposition}
\newtheorem{lemma}[defi]{Lemma}
\newtheorem{corollary}[defi]{Corollary}
\newtheorem{bsp}[defi]{Example}
\newtheorem{ass}[defi]{Assumption}



\title{Eigenvalue placement for regular matrix pencils with rank one perturbations
        }


\author{Hannes Gernandt\thanks{Institute of Mathematics, TU Ilmenau, Weimarer Stra\ss e 25, 98693 Ilmenau, Germany ({\tt hannes.gernandt@tu-ilmenau.de}).}
        \and Carsten Trunk\thanks{Institute of Mathematics, TU Ilmenau, Weimarer Stra\ss e 25, 98693 Ilmenau, Germany}. }

\begin{document}

\maketitle

\begin{abstract}
A regular matrix pencil $sE-A$ and its rank one perturbations are considered. We determine the sets in $\C\cup\{\infty\}$ which are the eigenvalues of the perturbed pencil. We show that the largest Jordan chains at each eigenvalue of $sE-A$ may disappear and the sum of the length of all destroyed Jordan chains is the number of eigenvalues (counted with multiplicities) which can be placed arbitrarily in $\C\cup\{\infty\}$.
We prove sharp upper and lower bounds of the change of the algebraic and geometric multiplicity of an eigenvalue under rank one perturbations.
Finally we apply our results to a pole placement problem  for a single-input differential algebraic equation with feedback.
\end{abstract}

\textit{Keywords:}  regular matrix pencils,\ rank one perturbations,\  spectral perturbation theory\vspace{0.5cm}\\

\textit{MSC 2010:} 15A22, 15A18, 47A55


\section{Introduction}
For square matrices $E$ and $A$ in $\C^{n\times n}$ we consider the matrix pencil
\begin{equation}\label{eins}
\mathcal A(s) := sE-A
\end{equation}
and study its set of eigenvalues $\sigma(\Ac)$, which is called the \textit{spectrum} of the matrix pencil. Here $\lambda\in\C$ is said to be an eigenvalue if $0$ is an eigenvalue of the matrix $\lambda E-A$ and we say that $\infty$ is an eigenvalue of $\Ac(s)$ if $E$ is not invertible. 

The spectral theory of matrix pencils is a generalization of the eigenvalue problem for matrices \cite{GLR09,M88,R89,SS90}. Recently, there is a growing interest in the spectral behavior under low rank perturbations of matrices \cite{BMRR15,DM03, MMRR09, RW11, S03,S04} and of matrix pencils \cite{B14, DM15, DMT08, EGR03, MMW15}. For matrices it was shown in \cite{DM03} that under generic rank one perturbations only the largest Jordan chain at each eigenvalue might be destroyed. Here a generic set of perturbations is a subset of $\C^{n \times n}$ which complement is a proper algebraic submanifold. 

We consider only \textit{regular} matrix pencils $\Ac(s)=sE-A$ which means that  the \textit{characteristic polynomial} $\det(sE-A)$ is not zero. Otherwise we call $\Ac(s)$ \textit{singular}. Jordan chains for regular matrix pencils at $\lambda$ correspond to Jordan chains of the matrix $J$ at $\lambda$, for  $\lambda\neq\infty$, or to Jordan chains of $N$ at $0$, for $\lambda=\infty$, in the Weierstra\ss\ canonical form \cite{G59},
\begin{align*}
S(sE-A)T=s\begin{pmatrix}I_r & 0 \\ 0 & N\end{pmatrix}-\begin{pmatrix}J& 0 \\ 0 & I_{n-r}\end{pmatrix},\quad r\in\{0,1,\ldots,n\}
\end{align*}
with $J\in\C^{r\times r}$ and $N\in\C^{(n-r)\times(n-r)}$ in Jordan canonical form, 
 $N$ nilpotent and invertible $S,T\in\C^{n\times n}$. 
In \cite{DMT08} it was shown that under generic rank one perturbations only the largest chain at $\lambda$ is destroyed. 

In this paper we investigate the behaviour of the spectrum of a regular matrix pencil
$\mathcal A(s)$ under a rank one perturbation $\mathcal P(s) := sF-G$.
As we will see in Section \ref{secDS}, the rank one condition allows us to write $\mathcal P(s)$ in the form
\begin{equation}\label{Peh}
\mathcal P(s) = (su + v)w^*\quad \mbox{or}\quad \mathcal P(s) = w(su^* + v^*)
\end{equation}
with non-zero vector $w$ and vectors $u$ and $v$ such that at least one of the two
is not zero.
Rank one perturbations of the above form
are considered in design problems for electrical circuits
 where the entries of $E$ are determined by
the capacitances of the circuit. The aim is to improve the frequency behavior
by adding additional capacitances between certain nodes.
This corresponds, within the model,
to a (structured) rank one perturbation of the matrix $E$,
see \cite{BHK15,BTW16,HKSSTW11}.
Here we follow a more general approach and obtain the following results:
\begin{itemize}
\item[\rm (i)]
We find for unstructured rank one perturbations of the form \eqref{Peh}
 sharp lower and upper bounds for the dimension of the root subspace $\mathcal {L}_\lambda(\mathcal A+ \mathcal P)$ of the perturbed pencil at $\lambda$ in terms of the dimension of $\mathcal L_\lambda(\mathcal A)$, where $\mathcal{L}_{\lambda}(\Ac)$ denotes the subspace of all Jordan chains of the matrix pencil $\Ac(s)$ at the eigenvalue $\lambda$. More precisely, if $m_1(\lambda)$ denotes the length of the longest chain of $\mathcal A(s)$ at  $\lambda$ and let $M(\mathcal A)$ be the sum of all $m_1(\lambda)$ over all eigenvalues of $\Ac(s)$, that is,
 $M(\mathcal A) = \sum_{\lambda\in \sigma(\mathcal A)} m_1(\lambda)$. Then the bounds are 
 \begin{equation}\label{Rooot}
 \dim\mathcal L_\lambda (\mathcal A) -m_1(\lambda) \leq
 \dim \mathcal L_\lambda(\mathcal A+ \mathcal P) \leq
 \dim\mathcal L_\lambda(\mathcal A) +M(A)-m_1(\lambda).
\end{equation}
\item[\rm (ii)] We show a statement on the eigenvalue placement for regular matrix pencils which is our main result: $M(\mathcal A)$ eigenvalues (counted with multiplicities) can be placed arbitrarily in the complex plane. Either by creating new eigenvalues with
    one chain only or by adding one new (or extending an old) 
    chain at existing eigenvalues of $\mathcal A(s)$. Here the term new
    eigenvalue is understood in the sense that this value is an eigenvalue
    of $(\mathcal A + \mathcal P)(s)$ but not of $\mathcal A(s)$. 
 In addition we  obtain the same result for real matrices and real rank one perturbations.
\end{itemize}
Roughly speaking, the behaviour of the spectrum under rank one
perturbations described in (i) and (ii) can be summarized in the
following way: At each eigenvalue of $\mathcal A(s)$  the
longest chain (or parts of it) may disappear but the remaining chains
at that eigenvalue
are then Jordan
chains of $(\mathcal A +\mathcal P)(s)$.
Moreover, the sum $M(\mathcal A)$ of the length of all the longest
chains is then the upper bound for the placement of new chains,
either at existing eigenvalues or at new eigenvalues. Those
new chains have to satisfy two rules. At new eigenvalues there
is only one chain with maximal length $M(\mathcal A)$ and at existing
eigenvalues at most one new chain may appear, again with maximal length $M(\mathcal A)$.
For a precise description of this placement result we refer to Theorem
\ref{pert} below.

The left hand side of \eqref{Rooot} is well-known \cite{B14,DMT08}. 
In the case of matrices, i.e.\ $E=I_n$ in \eqref{eins} and $u=0$ in \eqref{Peh},
we refer to \cite{S03,S04} and to \cite{BLMPT15,HM94} for operators.
Results similar to (i) are known in the literature for generic low-rank perturbations  \cite{B14,DM03,DMT08,MMRR09,S03}. In the generic case it was shown in \cite{DMT08} that only the largest chain at each eigenvalue is eventually destroyed. In Proposition \ref{trunk} we show a non generic result that gives a  bound on the change of the Jordan chains of length $k$ for all  $k\in\N\setminus\{0\}$. Similar bounds were previously obtained for matrices in \cite{S03} and for operators in \cite{BLMPT15}.

 In special cases the placement problem in (ii) is considered in the literature.
 For $E$ positive definite
and $A$ symmetric the placement problem was studied in \cite{EGR03}. In the matrix case,
i.e. $E=I_n$ and $u=0$, the placement problem was solved for symmetric $A$ in \cite{G73}.
In \cite{K92} a related inverse problem was studied: For two given subsets of the complex plane,  two matrices were constructed whose set of eigenvalues equal these sets and  the matrices differ by rank one. All these eigenvalue placement settings above are special cases of our result in Section \ref{mainsection} below. 

In Section \ref{rest} we investigate the eigenvalue placement under parameter restrictions in the perturbation, i.e. in the representation \eqref{Peh} we fix $u, v\in\C^n$. This allows us to derive a sharper bound as in \eqref{Rooot}.
For these restricted placement problems, we obtain simple conditions on the number of eigenvalues that can be assigned arbitrarily.

In the final section  we present an application. We consider
the pole assignment problem under state feedback for single
input differential-algebraic equations. This problem is well studied in the literature \cite{C81,KZ88,L86,LO85,M93,R70} even for singular
matrix pencils, see \cite{D11} and the references therein. However, for single input systems we can view this problem as a parameter restricted rank one
perturbation problem from Section \ref{rest}.

\section{Eigenvalues and Jordan Chains of Matrix Pencils}
In this section the notion of eigenvalues and Jordan chains for matrix pencils $\Ac(s)=sE-A$ with $E,A\in\C^{n\times n}$ is recalled. Furthermore we summarize some basic spectral properties which are implied by the well known Weierstra\ss\ canonical form \cite{G59}.

For fixed $\lambda\in\C$ observe that $\Ac(\lambda)$ is a matrix over $\C$. Hence the \textit{spectrum} of the matrix pencil $\Ac(s)=sE-A$ is defined as
\[
\sigma(\Ac):=\{\lambda\in\C ~|~ 0~\text{is an eigenvalue of $\Ac(\lambda)$}\},\quad \text{if  $E$ is invertible,}
\]
and
\[
\sigma(\Ac):=\{\lambda\in\C ~|~ 0~\text{is an eigenvalue of $\Ac(\lambda)$}\}\cup\{\infty\},\quad \text{if $E$ is singular.}
\]
Obviously the spectrum of a matrix pencil is a subset of the extended complex plane $\overline{\C}:=\C\cup\{\infty\}$ and
 the roots of the characteristic polynomial $\det(sE-A)$ are exactly the elements of $\sigma(\Ac)\setminus\{\infty\}$. 
Hence the spectrum of regular matrix pencils consists of finitely many points. For $\Ac(s)$ singular one always has $\sigma(\Ac)=\overline{\C}$.

We recall the notion for Jordan chains and root subspaces \cite[Section 1.4]{GLR09}, \cite[\S 11.2]{M88}).
The set $\{g_0,\ldots,g_{m-1}\}\subset \C^n$ is a 
\textit{Jordan chain} of \textit{length} $m$ at $\lambda\in\C$  if $g_0\neq0$ and
\begin{align*}
 (A-\lambda E)g_0=0,\quad (A-\lambda E)g_1=Eg_0,\quad \ldots,\quad (A-\lambda E)g_{m-1}=Eg_{m-2}
\end{align*}
and we call $\{g_0,\ldots,g_{m-1}\}\subset\C^n$ a Jordan chain of length $m$ at $\infty$ if
\begin{align*}
 g_0\neq0,\quad Eg_0=0,\quad Eg_1=Ag_0,\quad \ldots,\quad Eg_{m-1}=Ag_{m-2}.
\end{align*}
Two Jordan chains $\{g_0,\ldots,g_k\}$ and $\{h_0,\ldots,h_l\}$ at $\lambda\in\overline{\C}$ are called \textit{linearly independent}, if the vectors $g_0,\ldots g_k, h_0,\ldots,h_l$ are linearly independent. Furthermore, we say that $\Ac(s)$ has $k$ Jordan chains of length $m$ if there exist $k$ linearly independent Jordan chains of length $m$ at $\lambda\in\overline{\C}$.
We denote for $\lambda\in\overline{\C}$ and $l\in\N\setminus\{0\}$ the subspace of all elements of all Jordan chains up to the length $l$ at $\lambda$ by
\[
\Lc_{\lambda}^l(\Ac):=\Big\{g_{j}\in\C^n ~|~ 0\leq j\leq l-1,~ \text{$\{g_0,\ldots,g_j\}$ is a Jordan chain at $\lambda$}\Big\}
\]
and the root subspace which consists of all elements of all Jordan chains at $\lambda$,
\[
\Lc_{\lambda}(\Ac):=\bigcup_{l=1}^{\infty}\Lc_{\lambda}^l(\Ac).
\]
It is well known that regular pencils $\Ac(s)=sE-A$ 
can be transformed into the Weierstra\ss\ canonical form 
\cite[Chapter XII, \S 2]{G59}, i.e.\ there exist invertible matrices $S,T\in\C^{n\times n}$ and  $r\in\{0,1,\ldots,n\}$ such that
\begin{align}
\label{wnf}
S(sE-A)T=s\begin{pmatrix}I_r & 0 \\ 0 & N\end{pmatrix}-\begin{pmatrix}J& 0 \\ 0 & I_{n-r}\end{pmatrix},
\end{align}
with $J\in\C^{r\times r}$ and $N\in\C^{(n-r)\times(n-r)}$ in Jordan canonical form 
and $N$ nilpotent.
From the Weierstra\ss\ canonical form, we have some well known properties \cite{BTW16,G59}.
\begin{proposition}
\label{extralemma}
For a regular matrix pencil $\Ac(s)=sE-A$ with Weierstra\ss\ canonical form \eqref{wnf} the following holds.
\begin{itemize}
\item[\rm (a)]
A Jordan chain $\{g_0,\ldots,g_{m-1}\}$ of the matrix pencil at $\lambda\in\C$ of length $m$ corresponds to a Jordan chain $\{\pi_rT^{-1}g_0,\ldots,\pi_rT^{-1}g_{m-1}\}\subset\C^r$ of $J$ at $\lambda$ of length $m$. Here $\pi_r$ denotes the projection of $x\in\C^n$ onto the first $r$ entries. Vice versa a Jordan chain $\{h_0,\ldots,h_{m-1}\}$ of $J$ at $\lambda$ corresponds to a Jordan chain $\left\{T\begin{pmatrix}h_0 \\0\end{pmatrix},\ldots,T\begin{pmatrix}h_{m-1} \\0\end{pmatrix}\right\}$ of the matrix pencil at $\lambda$.

\item[\rm (b)] A Jordan chain $\{g_0,\ldots,g_{m-1}\}$ of the matrix pencil at $\infty$ of length $m$ corresponds to a Jordan chain $\{\pi_{n-r}T^{-1}g_0,\ldots,\pi_{n-r}T^{-1}g_{m-1}\}\subset\C^{n-r}$ of $N$ at $0$ of length $m$. Here $\pi_{n-r}$ denotes the projection of $x\in\C^n$ onto the last $n-r$ entries. Vice versa a Jordan chain $\{h_0,\ldots,h_{m-1}\}$ of $N$ at $0$ corresponds to a Jordan chain $\left\{T\begin{pmatrix}0\\ h_0 \end{pmatrix},\ldots,T\begin{pmatrix}0\\ h_{m-1} \end{pmatrix}\right\}$ of the matrix pencil at $\infty$.
\item[\rm (c)] A Jordan chain $\{g_0,\ldots,g_{m-1}\}$ of the matrix pencil at $\infty$ of length $m$ corresponds to a Jordan chain $\{h_0,\ldots,h_{m-1}\}$ of the dual pencil $\Ac'(s)=-sA+E$ at $0$.
\item[\rm (d)] If $r\geq 1$ we have $\sigma(\Ac)\setminus\{\infty\}=\sigma(J)$ and the characteristic polynomial of $sE-A$ is divisible by the minimal polynomial $m_J(s)$ of $J$ with
\begin{align}
\label{diefaktorisierung}
\det(sE-A)=(-1)^{n-r}\det(ST)^{-1}m_J(s)q(s),
\end{align}
where $q(s)$ is a monic polynomial of degree $r-\deg m_J$. The value $\lambda\in\sigma(\Ac)\setminus\{\infty\}$ is a root of $q(s)$ if and only if $\dim\ker\Ac(\lambda)\geq 2$.
Moreover the multiplicity of a root $\lambda$ of $\det(sE-A)$ is equal to $\dim\mathcal{L}_{\lambda}(\Ac)$ and we have
\begin{align}
\label{summegleichn}
\sum_{\lambda\in\sigma(\Ac)}\dim\mathcal{L}_{\lambda}(\Ac)=n.
\end{align}
\end{itemize}
\end{proposition}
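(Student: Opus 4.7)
The plan is to reduce every assertion to a routine statement about the Jordan form of a single matrix by exploiting the block-diagonal structure of the Weierstrass form \eqref{wnf}.

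For part (a), I would introduce new coordinates $\tilde g_i := T^{-1} g_i = \begin{pmatrix} x_i \\ y_i \end{pmatrix}$ with $x_i\in\C^r$, $y_i\in\C^{n-r}$, and multiply each chain equation $(A-\lambda E)g_i=Eg_{i-1}$ by $S$ from the left. Using $SET=\mathrm{diag}(I_r,N)$ and $SAT=\mathrm{diag}(J,I_{n-r})$, the system decouples into
\begin{equation*}
(J-\lambda I_r)\, x_i = x_{i-1},\qquad (I_{n-r}-\lambda N)\, y_i = N y_{i-1},
\end{equation*}
with the convention $x_{-1}=y_{-1}=0$. Since $N$ is nilpotent, $I_{n-r}-\lambda N$ is invertible for every $\lambda\in\C$, so a short induction forces $y_i=0$ throughout; the remaining equations together with $x_0=\pi_r T^{-1}g_0\neq 0$ express exactly that $\{x_0,\ldots,x_{m-1}\}$ is a Jordan chain of $J$ at $\lambda$. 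The converse direction follows by reading the same computation backwards and lifting $h_i$ to $g_i:=T\binom{h_i}{0}$. Part (b) is entirely analogous, but uses the other block: at $\infty$ the transformed equations become $x_k=Jx_{k-1}$ and $Ny_k=y_{k-1}$, forcing $x_k=0$ for all $k$ and identifying $\{y_0,\ldots,y_{m-1}\}$ as a Jordan chain of $N$ at $0$. Part (c) is essentially tautological: writing $\Ac'(s)=s(-A)-(-E)$, the Jordan chain relations for $\Ac'$ at $0$ are $-Eg_0=0$ and $-Eg_k=-Ag_{k-1}$, which after cancelling signs coincide literally with the defining relations of a Jordan chain of $\Ac$ at $\infty$, so one may take $h_i=g_i$.

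For (d), I would start from \eqref{wnf} and compute
\begin{equation*}
\det(sE-A)=\det(S)^{-1}\det(T)^{-1}\det(sI_r-J)\det(sN-I_{n-r}),
\end{equation*}
and observe that $\det(sN-I_{n-r})=(-1)^{n-r}$ since $sN$ is nilpotent for every $s$. Hence $\sigma(\Ac)\setminus\{\infty\}=\sigma(J)$ and the polynomial identity of the claim holds with $q(s):=\chi_J(s)/m_J(s)$, a monic polynomial of degree $r-\deg m_J$. For the characterization of the roots of $q$: $\lambda$ is a root iff the algebraic multiplicity of $\lambda$ in $J$ strictly exceeds the size of the largest Jordan block at $\lambda$, iff $J$ has at least two Jordan blocks at $\lambda$, iff $\dim\ker(J-\lambda I)\geq 2$; and via part (a) the map $g\mapsto\pi_rT^{-1}g$ provides an isomorphism $\ker\Ac(\lambda)\to\ker(J-\lambda I)$ (the $y$-block is killed by the invertible factor $I_{n-r}-\lambda N$), so this is equivalent to $\dim\ker\Ac(\lambda)\geq 2$. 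Finally, the standard matrix fact that the dimension of the generalized eigenspace at $\lambda$ equals its algebraic multiplicity, applied to $J$ via (a) and to $N$ at $0$ via (b), gives $\dim\mathcal L_\lambda(\Ac)$ equals the multiplicity of $\lambda$ in $\det(sE-A)$ for finite $\lambda$ and $\dim\mathcal L_\infty(\Ac)=n-r$; summing over $\sigma(\Ac)$ yields $r+(n-r)=n$, which is \eqref{summegleichn}.

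The only genuine difficulty here is the bookkeeping—keeping track of the two blocks and of sign conventions across the similarity transformation, especially in treating the finite case and $\lambda=\infty$ separately. No deeper obstacle is expected, as all four assertions ultimately reduce to well-established properties of the matrix Jordan canonical form.
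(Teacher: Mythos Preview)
Your argument is correct in every part: the coordinate change via $T^{-1}$ together with the block-diagonal structure of $SET$ and $SAT$ cleanly decouples the chain relations, the invertibility of $I_{n-r}-\lambda N$ (resp.\ the relation $x_k=Jx_{k-1}$ with $x_0=0$) kills the irrelevant block in (a) (resp.\ (b)), part (c) is indeed a direct unwinding of definitions, and your treatment of (d) via $\det(sN-I_{n-r})=(-1)^{n-r}$ and $q=\chi_J/m_J$ is the standard route.

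There is nothing to compare against: the paper does not prove this proposition at all but simply records it as ``well known properties'' with references to \cite{BTW16,G59}. Your write-up is therefore strictly more detailed than what the paper offers, and would serve well as the omitted proof.
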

There are $\dim\ker\Ac(\lambda)$ linearly independent Jordan chains at $\lambda$ and, by Proposition \ref{extralemma}, this corresponds to the number of linearly independent Jordan chains of $J$ at $\lambda\neq\infty$ or $N$ at $0$ for $\lambda=\infty$. Each of these $\dim\ker\Ac(\lambda)$ different Jordan chains has a length which we denote by $m_j(\lambda)$, $1\leq j\leq \dim\ker\Ac(\lambda)$. These numbers $m_j(\lambda)$ are not uniquely determined, more precisely, they depend on the chosen Weierstra\ss\ canonical form \eqref{wnf} but they are unique up to permutations. In the following, we will choose those numbers in a specific way and we fix this in the following assumption.


\begin{ass}
\label{nijann}
Given a regular pencil $\Ac(s)=sE-A$ which has Weierstra\ss\ canonical form \eqref{wnf} with $r\in\{0,1,\ldots,n\}$ and matrices $J\in\C^{r \times r}$ and $N\in\C^{(n-r)\times (n-r)}$. 
Then we assume that for $\lambda\in\sigma(\Ac)$ the numbers $m_j(\lambda)$, $1\leq j\leq \dim\ker\Ac(\lambda)$, are sorted in a non-decreasing order
\begin{align}
\label{wirklichnijann}
m_{1}(\lambda)\geq\ldots\geq m_{\dim\ker\Ac(\lambda)}(\lambda).
\end{align}
\end{ass}
Observe that Assumption \ref{nijann} is no restriction for regular pencils. This means that for every regular pencil the matrices $S$ and $T$ in \eqref{wnf} can be chosen in such a way that the
Jordan blocks of $J$ 
satisfy the condition \eqref{wirklichnijann}, see \cite{G59}.
Therefore with Assumption \ref{nijann} the minimal polynomial $m_J(s)$ of $J$ can be written as
\begin{align*}
m_J(s)=\prod_{\lambda\in\sigma(J)}(s-\lambda)^{m_1(\lambda)}
\end{align*}
and we introduce 
\begin{align}
\label{minpol}
m_{\Ac}(s):=\begin{cases}\prod_{\lambda\in\sigma(J)}(s-\lambda)^{m_1(\lambda)}, & \text{for $r\geq 1$,} \\
1, & \text{for $r=0$.}
\end{cases}
\end{align}
Note that with this definition the equation \eqref{diefaktorisierung} also holds for $r=0$ after replacing $m_J(s)$ by $m_{\Ac}(s)$.
\section{The structure of rank one pencils}
\label{secDS}
In this section we study pencils of rank one. Recall that the rank of a pencil $\Ac(s)$ is the largest $r\in\N$ such that $\Ac(s)$, viewed as a matrix with polynomial entries, has minors of size $r$  that are not the zero polynomial  \cite{DMT08,G59}.
This implies that $\Ac(s)$ has rank equal to $n$ if and only if $\Ac(s)$ is regular.
Hence, pencils of rank one are not regular for $n\geq 2$, meaning that they cannot be transformed to Weierstra\ss\ canonical form. Nevertheless, there is a simple representation given in the following proposition.
\begin{proposition}
\label{DS}
The pencil $\Pc(s)=sF-G$ with $F,G\in\C^{n\times n}$  has rank one if and only if there exists $u,v,w\in\C^n$ with $w\neq 0$ and ($u\neq0$ or $v\neq0$) such that
\begin{align}
\label{dsrank1}
\Pc(s)=(su+v)w^* \quad \text{or}\quad \Pc(s)=w(su^*+v^*).  
\end{align}
If $F,G$ are real matrices, then $u,v,w$ can be chosen to have real-valued entries. 
\end{proposition}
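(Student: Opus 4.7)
The plan is to reduce the problem to a standard factorization fact for polynomial matrices of rank one. The key observation is that a polynomial matrix $\Pc(s)\in\C[s]^{n\times n}$ has rank (in the sense defined before the proposition) equal to one if and only if there exist polynomial vectors $p(s),q(s)\in\C[s]^n$, both nonzero, with $\Pc(s)=p(s)q(s)^T$. The nontrivial direction of this auxiliary fact follows from the Smith normal form over the PID $\C[s]$: writing $\Pc(s)=U(s)\,\mathrm{diag}(d(s),0,\ldots,0)\,V(s)$ with unimodular $U,V$, one reads off $p(s)=d(s)\,U(s)e_1$ and $q(s)^T=e_1^TV(s)$. Alternatively, one may factor $\Pc$ as a rank-one matrix over the field $\C(s)$ of rational functions and then clear denominators and common factors.

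Once this factorization is available, the rest is a degree count. Since $\Pc(s)=sF-G$ has degree at most one in $s$, the identity $\Pc(s)=p(s)q(s)^T$ forces $\deg p+\deg q\leq 1$, so at least one of the two polynomial vectors is constant. If $q(s)$ is a constant vector, I set $w:=\overline{q}$ and write $p(s)=su+v$, giving the first normal form $\Pc(s)=(su+v)w^*$. If instead $p(s)$ is a constant vector $w$, I write $q(s)=s\overline{u}+\overline{v}$ so that $q(s)^T=su^*+v^*$, yielding the second form $\Pc(s)=w(su^*+v^*)$. The nondegeneracy conditions $w\neq 0$ and ($u\neq 0$ or $v\neq 0$) are automatic, since $\Pc(s)$ has rank exactly one (hence is nonzero) and therefore neither factor in the outer product can vanish and the nonconstant factor cannot be a zero-degree polynomial. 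The converse direction is immediate: a pencil of either form $(su+v)w^*$ or $w(su^*+v^*)$ is a product of a nonzero vector and a nonzero polynomial row vector, so every $2\times 2$ minor vanishes identically, while some entry is a nonzero polynomial; hence the rank equals one.

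For the real case, I would simply run the same argument inside $\R[s]$. When $F,G\in\R^{n\times n}$, the Smith form (or equivalently the rational factorization) can be carried out over $\R[s]$, so the factors $p(s)$ and $q(s)$ can be chosen with real coefficients, and the vectors $u,v,w$ extracted above are then real.

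The only genuinely nonroutine step is the polynomial rank-one factorization; the remainder is a short degree bookkeeping. I would cite the Smith normal form in a single sentence rather than reprove it, and spend the body of the proof on the explicit identification of the scalars $u,v,w$ from $p(s)$ and $q(s)$.
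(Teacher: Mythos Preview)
Your argument is correct, but it proceeds along a genuinely different line from the paper's. The paper never invokes the Smith form; instead it evaluates $\Pc(s)$ at the three points $s=0,1,2$. From $\rk\Pc(\lambda)\leq 1$ for all $\lambda$ one gets $G=uv^*$ and $F-G=wz^*$, and then the rank condition at $s=2$ forces a linear dependence between the pairs $\{u,w\}$ or $\{v,z\}$, which after a short case distinction yields one of the two normal forms. The real case is handled by the identical computation over $\R$.

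Your Smith-form approach is cleaner and more conceptual: once one knows $\Pc(s)=p(s)q(s)^T$ with $p,q\in\C[s]^n$ nonzero, the degree identity $\deg(pq^T)=\deg p+\deg q$ (the leading coefficients are nonzero vectors, so their outer product cannot vanish) immediately forces one factor to be constant, and the rest is relabelling. The price is that you import a nontrivial structural theorem, whereas the paper's argument is completely elementary and self-contained. Your route also generalizes without extra work to pencils of higher degree in $s$, while the paper's three-point evaluation is tailored to degree one. Either proof is acceptable; if you keep yours, a one-line reference for the Smith form suffices, and you might tighten the sentence about nondegeneracy (both factors could be constant when $F=0$, in which case $u=0$ but $v\neq 0$, so the condition still holds).
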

\begin{proof}
Given that $\Pc(s)$ has rank one, then all minors of $\Pc(s)$ of size strictly larger than one  are the zero polynomial. Since 
\begin{align}
\label{rangkleinereins}
\rk(\Pc(\lambda))=\rk (\lambda F-G)\leq 1\quad \text{for all $\lambda\in\C$,}
\end{align}
 for $\lambda=0$ we have $\rk(G)\leq 1$. Then there exist  $u,v\in\C^n$ with $G=uv^*$. For $\lambda=1$ we have $\rk (F-G)\leq 1$, so there exists $w,z\in\C^n$ with $F-G=wz^*$. Using the representations above we see
\[
2F-G=2(F-G)+G=2wz^*+uv^*.
\]
From \eqref{rangkleinereins}, for $\lambda=2$ we have that $\rk(2F-G)\leq 1$. If $u$ and $w$ are linearly independent  then $z=\alpha v$ or $v=\alpha z$ for some $\alpha\in\C$. Let $z=\alpha v$ (the case $v=\alpha z$ can be proven similarly). Then
\[
sF-G=s(uv^*+wz^*)-uv^*=(s(u+\alpha w)-u)v^*,
\]
therefore $\Pc(s)$ admits a representation as in \eqref{dsrank1}. Now, assume that $u$ and $w$ are linearly dependent. Let $u=\beta w$ for some $\beta\in\C$  (the case $w=\beta u$ can be proven similarly), then
\[
sF-G=s(uv^*+wz^*)-uv^*=w(s(\beta v^*+z^*)-\beta v^*)
\]
holds, hence \eqref{dsrank1} is proven. The converse statement is obvious. For $F,G\in\R^{n\times n}$ the arguments above remain valid after replacing $\C$ by $\R$ and $u^*,v^*,z^*$ and $w^*$ by $u^T, v^T, z^T$ and $w^T$.
\end{proof}

The following example illustrates that both representations in \eqref{dsrank1} are necessary.
\begin{bsp}
A short computation shows that the matrix pencils
\begin{align*}
\Pc_1(s)&:=\begin{pmatrix} s+1 &s+1\\ 1 & 1\end{pmatrix}=\left(s\begin{pmatrix} 1 \\ 0\end{pmatrix}+\begin{pmatrix}1 \\ 1\end{pmatrix}\right)(1,1),\\
\Pc_2(s)&:=\begin{pmatrix} s+1 & 1 \\ s+1 & 1\end{pmatrix}=\begin{pmatrix}1\\1\end{pmatrix}\left(s(1,0)+(1,1)\right)
\end{align*}
admit only one of the representations given in Proposition \ref{DS}.
\end{bsp}

If in \eqref{dsrank1} the elements $u,v\in\C^n$ are linearly dependent both representations in \eqref{dsrank1} coincide and without restriction we can write for non-zero $(\alpha,\beta)\in\C^2$
\begin{align}
\label{dasP}
\Pc(s)=(\alpha s -\beta)uw^*.
\end{align}
The next lemma provides a simple criterion for $(\Ac+\Pc)(s)$ to be regular when $\Pc(s)$ is of the form \eqref{dasP}.
\begin{lemma}
\label{zusatzlemma}
Let $\Ac(s)=sE-A$ be regular. Choose $(\alpha,\beta)\in\C^2$ non-zero and let $\Pc(s)$ be given by \eqref{dasP}. Then the following holds.
\begin{itemize}
    \item[\rm (a)] Assume $\alpha\neq 0$. If $\beta/\alpha\in\sigma(\Ac)$ then  $\beta/\alpha\in\sigma(\Ac+\Pc)$. If  $\beta/\alpha\notin\sigma(\Ac)$ then $(\Ac+\Pc)(s)$ is regular.
    \item[\rm (b)] Assume $\alpha=0$. If $\infty\in\sigma(\Ac)$ then $\infty\in\sigma(\Ac+\Pc)$. If  $\infty\notin\sigma(\Ac)$ then $(\Ac+\Pc)(s)$ is regular.
\end{itemize}
\end{lemma}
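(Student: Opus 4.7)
My plan is to reduce everything to a single polynomial identity for the determinant of the perturbed pencil, obtained from the matrix determinant lemma. Since $\mathcal{P}(s) = (\alpha s - \beta) u w^*$ is a rank-one perturbation of the matrix $\mathcal{A}(s)$ for each fixed $s$, the identity
\begin{equation*}
\det\bigl(\mathcal{A}(s)+\mathcal{P}(s)\bigr) \;=\; \det(\mathcal{A}(s)) \;+\; (\alpha s-\beta)\,w^{*}\,\mathrm{adj}(\mathcal{A}(s))\,u
\end{equation*}
holds, and both sides are honest polynomials in $s$ (the adjugate of a polynomial matrix is itself polynomial). This is the only tool I need.

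For part (a), in the first subcase I would observe that at $s_0 := \beta/\alpha$ the scalar factor $\alpha s - \beta$ vanishes, so $\mathcal{P}(s_{0}) = 0$ and consequently $(\mathcal{A}+\mathcal{P})(s_{0}) = \mathcal{A}(s_{0})$; if $s_0\in\sigma(\mathcal{A})$ then $\mathcal{A}(s_0)$ is singular, so $s_0\in\sigma(\mathcal{A}+\mathcal{P})$ holds regardless of whether the perturbed pencil is regular. For the second subcase, I would evaluate the displayed polynomial identity at $s=s_0$: the perturbation term again drops out, leaving $\det((\mathcal{A}+\mathcal{P})(s_{0}))=\det(\mathcal{A}(s_{0}))\neq 0$ by assumption. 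Hence the characteristic polynomial of $\mathcal{A}+\mathcal{P}$ is not the zero polynomial, which is exactly the definition of regularity.

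For part (b), $\alpha=0$ forces $\beta\neq 0$ and $\mathcal{P}(s) = -\beta u w^{*}$ is a constant (in $s$) matrix. Writing $\mathcal{A}+\mathcal{P} = sE - (A+\beta u w^{*})$, the coefficient of $s$ is unchanged. If $\infty\in\sigma(\mathcal{A})$, i.e.\ $E$ is not invertible, then the same $E$ appears in $\mathcal{A}+\mathcal{P}$ and so $\infty\in\sigma(\mathcal{A}+\mathcal{P})$ directly from the definition of the spectrum. If $\infty\notin\sigma(\mathcal{A})$, then $E$ is invertible, which immediately makes the determinant a polynomial of full degree $n$ in $s$, hence nonzero, so $\mathcal{A}+\mathcal{P}$ is regular.

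I do not anticipate a serious obstacle: the only point requiring care is the polynomial character of $w^{*}\,\mathrm{adj}(\mathcal{A}(s))\,u$, which justifies pointwise evaluation at $s=\beta/\alpha$ even when $\mathcal{A}(s)$ is not invertible there. Once that is stated, both (a) and (b) fall out by inspection of the identity at a single distinguished value of $s$.
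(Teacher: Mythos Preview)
Your proof is correct and follows essentially the same approach as the paper: for (a) you evaluate at $s_0=\beta/\alpha$, where $\mathcal{P}(s_0)=0$ so that $\det((\mathcal{A}+\mathcal{P})(s_0))=\det(\mathcal{A}(s_0))$, and for (b) you observe that the leading coefficient $E$ is unchanged. The adjugate identity you set up is a mild elaboration---the paper simply notes $\mathcal{P}(\beta/\alpha)=0$ directly---but the substance of the argument is the same.
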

\begin{proof}
For $\alpha\neq0$ we have
\begin{equation*}
\det(\Ac+\Pc)(\frac{\beta}{\alpha})=\det\left(\frac{\beta}{\alpha}E-A+(\alpha \frac{\beta}{\alpha} -\beta)uw^*\right)=\det\left(\frac{\beta}{\alpha}E-A\right)=\det\Ac(\frac{\beta}{\alpha})
\end{equation*}
and (a) follows. For $\alpha=0$ we use that $\infty\in\sigma(\Ac)$ if and only if the leading coefficient $E$ is singular. Since  $\alpha=0$, the leading coefficient of $(\Ac+\Pc)(s)$ is $E$ for all $u,w\in\C^n$, hence (b) is proved. 
\end{proof}


\section{Change of the root subspaces under rank one perturbations} 
In this section, we obtain bounds on the number of eigenvalues which can be changed by a rank one perturbation. 
The following lemma is a special case ($r=1$) of \cite[Lemma 2.1]{DMT08}. 
\begin{lemma}
\label{DM}
Let $\Ac(s)$ be a regular matrix pencil satisfying Assumption \ref{nijann} and let $\Pc(s)$ be of rank one. Assume that $(\Ac+\Pc)(s)$ is regular and let $\lambda\in\overline{\C}$ be an eigenvalue of $\Ac(s)$. Then $(\Ac+\Pc)(s)$ has at least $\dim\ker\Ac(\lambda)-1$ linearly independent Jordan chains at $\lambda$. For $\dim\ker\Ac(\lambda)\geq 2$ 
these chains can be sorted in such a way that for the length of the {\rm i}th chain $\tilde{m}_i(\lambda)$ the following holds
\[
\tilde{m}_2(\lambda)\geq\ldots\geq \tilde{m}_{\dim\ker\Ac(\lambda)}(\lambda)\quad \text{and}\quad \tilde{m}_i(\lambda)\geq m_i(\lambda),\quad 2\leq i\leq \dim\ker\Ac(\lambda).
\]
\end{lemma}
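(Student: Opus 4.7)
The plan is to invoke a classical interlacing inequality for the partial multiplicities of polynomial matrices under rank-one perturbations (a Thompson-type result), and then to relabel the surviving chains.

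First I would reduce to the case $\lambda\in\C$. If $\lambda=\infty$, Proposition~\ref{extralemma}(c) identifies Jordan chains at $\infty$ of $\Ac(s)$ with Jordan chains at $0$ of the dual pencil $\Ac'(s)=-sA+E$; the associated perturbation $\Pc'(s)=-sG+F$ is again of rank one, so the statement at $\infty$ follows from the statement at $0$ for the dual pencil. Hence I may assume $\lambda\in\C$ and fix the sorted partial multiplicities $m_1(\lambda)\geq\ldots\geq m_k(\lambda)\geq 1$ with $k:=\dim\ker\Ac(\lambda)$.

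The main tool is the following interlacing statement for polynomial matrices: if $n\times n$ polynomial matrices $M(s)$ and $\tilde M(s)$ differ by a polynomial matrix of rank at most one (over $\C(s)$), and if their partial multiplicities at $\lambda$, sorted in non-increasing order and padded with zeros, are denoted $m_1\geq m_2\geq\ldots$ (for $M$) and $\tilde\mu_1\geq\tilde\mu_2\geq\ldots$ (for $\tilde M$), then $\tilde\mu_i\geq m_{i+1}$ for every $i\geq 1$. Applying this to $M=\Ac$ and $\tilde M=\Ac+\Pc$, and then setting $\tilde m_i(\lambda):=\tilde\mu_{i-1}$ for $2\leq i\leq k$, one obtains $\tilde m_2(\lambda)\geq\ldots\geq\tilde m_k(\lambda)$ together with $\tilde m_i(\lambda)\geq m_i(\lambda)\geq 1$. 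The positivity guarantees that the corresponding $k-1$ Jordan chains of $(\Ac+\Pc)(s)$ at $\lambda$ are genuine.

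The main obstacle is justifying the interlacing statement. The cleanest route works with the local Smith normal form of $\Ac(s)$ and $(\Ac+\Pc)(s)$ at $\lambda$ and compares the ideals of $j\times j$ minors: a rank-one perturbation changes each such ideal by exactly one divisibility step, yielding the required shift. An alternative and more elementary route is a direct construction: by Proposition~\ref{DS} write $\Pc(s)=(su+v)w^*$, so that $F=uw^*$ and $\Pc(\lambda)=(\lambda u+v)w^*$; one then tries to exhibit inside $\Lc_\lambda(\Ac)\cap\ker w^*$ a family of $k-1$ linearly independent Jordan chains of $\Ac$ at $\lambda$ of lengths at least $m_2(\lambda),\ldots,m_k(\lambda)$. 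Since $w^*$ annihilates every vector of such a chain, both $\Pc(\lambda)$ and $F$ vanish on the chain, and a direct substitution into the Jordan chain equations for $\Ac+\Pc$ shows that the chain persists unchanged. The combinatorial heart of this alternative is a sub-lemma on nilpotent operators: given any hyperplane $H$ in $V$, there is a Jordan basis of $V$ for which all but the longest cyclic summand lies in $H$; this follows from the fact that the largest $N$-invariant subspace contained in $H$ has codimension at most $m_1$, and an inspection of its induced Jordan structure.
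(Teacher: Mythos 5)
The paper does not actually prove this lemma: it is taken verbatim as the special case $r=1$ of \cite[Lemma 2.1]{DMT08} and used as a black box. Your first route is therefore a genuinely parallel move: reducing to Thompson's interlacing theorem for invariant factors under rank-one perturbations over a PID (\cite{T80}, which the paper cites elsewhere), applied to $\C[s]$, combined with the standard identification of the Jordan chain lengths of a regular pencil at $\lambda$ with the partial multiplicities of its local Smith form, and the dual-pencil reduction for $\lambda=\infty$. That chain of reductions is correct and delivers exactly the termwise inequality $\tilde{m}_i(\lambda)\geq m_i(\lambda)$; as a citation-level argument it is as complete as what the paper does.

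The gaps appear when you try to justify the interlacing yourself. First, comparing the ideals of $j\times j$ minors only yields the determinantal-divisor divisibilities $d_{j-1}(\Ac)\mid d_j(\Ac+\Pc)$ and $d_{j-1}(\Ac+\Pc)\mid d_j(\Ac)$, which translate into tail-sum inequalities on the partial multiplicities, not the termwise shift: the sequences $m=(2,2,2)$ and $\tilde{\mu}=(2,1,1)$ satisfy every one of these inequalities in both directions yet violate $\tilde{\mu}_2\geq m_3$. So the termwise statement is strictly stronger than what the minor ideals give; this is exactly why Thompson's theorem (equivalently \cite[Lemma 2.1]{DMT08}) is a nontrivial result that must be cited or proved by a finer argument. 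Second, in the elementary route, knowing that the largest $N$-invariant subspace $H'$ contained in a hyperplane $H$ has codimension at most $m_1$ does not control the Jordan structure of $N|_{H'}$: for $N$ nilpotent with two blocks of size $2$, the invariant subspace $\ker N$ has codimension $2=m_1$ but partial multiplicities $(1,1)$, which do not dominate $(m_2)=(2)$. One must genuinely exploit that $H'=\bigcap_{j\geq 0}N^{-j}H$ is the \emph{maximal} invariant subspace of a hyperplane; that ``inspection'' is where the real work lies. Finally, Proposition \ref{DS} allows the second form $\Pc(s)=w(su^*+v^*)$, for which your persistence argument would require the chains to lie in the intersection of two hyperplanes; this case needs a separate reduction (e.g., via transposed pencils), whereas the Thompson route covers both forms uniformly.
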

The following result describes the maximal change of the root subspace dimension under rank one perturbations. For matrices, that is, if $E=I_n$, this result was obtained in \cite{S03}, see also \cite{BLMPT15,T80}. 
\begin{proposition}
\label{trunk}
Let $\Ac(s)$ be a regular matrix pencil satisfying Assumption \ref{nijann}
, then for any rank one pencil $\Pc(s)$ such that $(\Ac+\Pc)(s)$ is regular we have for all $\lambda\in\overline{\C}$ and $k\in\N\setminus\{0\}$
	\begin{align}
	\label{einprostufe}
	\left|\dim\frac{\mathcal{L}_{\lambda}^{k+1}(\Ac+\Pc)}{\mathcal{L}_{\lambda}^k(\Ac+\Pc)}-\dim\frac{\mathcal{L}_{\lambda}^{k+1}(\Ac)}{\mathcal{L}_{\lambda}^{k}(\Ac)}\right|\leq 1,\\[1ex]
		\label{trunkungl}
	|\dim \mathcal{L}_{\lambda}^k(\Ac+\Pc)-\dim \mathcal{L}_{\lambda}^k(\Ac)|\leq k.
	\end{align}
\end{proposition}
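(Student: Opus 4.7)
The plan is to reduce both inequalities to the single uniform bound $|\delta_j(\Ac+\Pc) - \delta_j(\Ac)| \leq 1$ for every $j \geq 1$, where
\[
\delta_j(\Ac) := \dim \mathcal{L}_\lambda^j(\Ac) - \dim \mathcal{L}_\lambda^{j-1}(\Ac), \qquad \mathcal{L}_\lambda^0(\Ac) := \{0\}.
\]
Under Assumption \ref{nijann}, $\delta_j(\Ac)$ is precisely the number of Jordan chains of $\Ac$ at $\lambda$ of length at least $j$, i.e.\ $\delta_j(\Ac) = \#\{i : m_i(\lambda) \geq j\}$. Inequality \eqref{einprostufe} is exactly this bound for $j = k+1$, while \eqref{trunkungl} follows from the telescoping identity $\dim \mathcal{L}_\lambda^k(\Ac+\Pc) - \dim \mathcal{L}_\lambda^k(\Ac) = \sum_{j=1}^k (\delta_j(\Ac+\Pc) - \delta_j(\Ac))$ combined with the triangle inequality.

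The core step is a symmetric application of Lemma \ref{DM}. Applied to $\Ac$ with perturbation $\Pc$, the lemma produces $p - 1$ linearly independent Jordan chains of $\Ac + \Pc$ at $\lambda$ whose sorted lengths $\tilde{m}_2', \ldots, \tilde{m}_p'$ satisfy $\tilde{m}_i' \geq m_i(\lambda)$ for $2 \leq i \leq p$, where $p = \dim \ker \Ac(\lambda)$. Counting those of length $\geq j$ gives at least $\#\{i \in \{2, \ldots, p\} : m_i(\lambda) \geq j\} = \max(0, \delta_j(\Ac) - 1)$, since by Assumption \ref{nijann} the indices $i$ with $m_i(\lambda) \geq j$ are precisely $\{1, \ldots, \delta_j(\Ac)\}$. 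Hence $\delta_j(\Ac + \Pc) \geq \delta_j(\Ac) - 1$. For the reverse inequality I observe that $\Ac = (\Ac + \Pc) + (-\Pc)$ is a rank-one perturbation of $\Ac + \Pc$, both pencils are regular by hypothesis, and Assumption \ref{nijann} poses no restriction on the regular pencil $\Ac + \Pc$; a second application of Lemma \ref{DM} then yields $\delta_j(\Ac) \geq \delta_j(\Ac + \Pc) - 1$.

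The main obstacle is the careful bookkeeping around the ``missing'' longest chain: Lemma \ref{DM} gives no control on the first (longest) Jordan chain, so one must argue precisely that omitting the index $i = 1$ costs at most one unit in the counting function $\delta_j$. The edge case $\delta_j(\Ac) = 0$ is handled trivially from $\delta_j(\Ac + \Pc) \geq 0$. Once $|\delta_j(\Ac + \Pc) - \delta_j(\Ac)| \leq 1$ has been established for all $j \geq 1$, both \eqref{einprostufe} and \eqref{trunkungl} follow from the reduction described above.
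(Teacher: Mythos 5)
Your proposal is correct and follows essentially the same route as the paper: both rest on the two-sided application of Lemma \ref{DM} (to $\Ac$ with $\Pc$ and then to $\Ac+\Pc$ with $-\Pc$, noting Assumption \ref{nijann} is no restriction on a regular pencil) to get the one-step bound \eqref{einprostufe}, and then telescope to obtain \eqref{trunkungl}. Your uniform treatment via $\delta_j$ merely streamlines the paper's separate handling of the cases $l\geq 2$, $l=1$, and $k=1$ (where the paper uses a rank--nullity argument instead of $\delta_1$), and absorbs the $\lambda=\infty$ case that the paper treats via the dual pencil.
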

\begin{proof}
We prove the inequality \eqref{einprostufe}. Assume $\lambda\neq\infty$ and that for $k,l\in\N\setminus\{0\}$ we have
\begin{align}
\label{q}
\dim\frac{\mathcal{L}_{\lambda}^{k+1}(\Ac)}{\mathcal{L}_{\lambda}^k(\Ac)}=l\geq 2.
\end{align}
This is equivalent to the fact $\Ac(s)$ has $l$ linearly independent Jordan chains at $\lambda$ with length at least $k+1$ which means
\[
m_1(\lambda)\geq m_2(\lambda)\ldots\geq m_l(\lambda)\geq k+1.
\]
It follows from Lemma \ref{DM} that $(\Ac+\Pc)(s)$ has at least $l-1$ linearly independent Jordan chains with lengths
\[
\tilde{m}_2(\lambda)\geq\ldots\geq \tilde{m}_{l}(\lambda)\geq m_l(\lambda)\geq k+1
\]
which leads to
\begin{align}
\label{noch}
\dim\frac{\mathcal{L}_{\lambda}^{k+1}(\Ac+\Pc)}{\mathcal{L}_{\lambda}^k(\Ac+\Pc)}\geq l-1.
\end{align}
It remains to show that the expression in \eqref{noch} is less or equal to $l+1$.
Indeed, assume
\[
\dim\frac{\mathcal{L}_{\lambda}^{k+1}(\Ac+\Pc)}{\mathcal{L}_{\lambda}^k(\Ac+\Pc)}\geq l+2.
\]
If we consider the regular pencil $(\Ac+\Pc)(s)$ and the rank one pencil $-\Pc(s)$ and apply the above arguments, we have that
\[
\dim\frac{\mathcal{L}_{\lambda}^{k+1}(\Ac)}{\mathcal{L}_{\lambda}^k(\Ac)}\geq l+1,
\]
which is a contradiction to \eqref{q}. 
Hence, \eqref{einprostufe} is shown for $l\geq 2$. If $l=1$, i.e.\
\[
\dim\frac{\mathcal{L}_{\lambda}^{k+1}(\Ac)}{\mathcal{L}_{\lambda}^{k}(\Ac)}=1
\]
and assume that $\dim\frac{\mathcal{L}_{\lambda}^{k+1}(\Ac+\Pc)}{\mathcal{L}_{\lambda}^{k}(\Ac+\Pc)}\geq 3$. Lemma \ref{DM} applied to the regular matrix pencil $(\Ac+\Pc)(s)$ shows $\dim\frac{\mathcal{L}_{\lambda}^{k+1}(\Ac)}{\mathcal{L}_{\lambda}^{k}(\Ac)}\geq 2$, a contradiction and \eqref{einprostufe} follows.

Now we show \eqref{trunkungl}. For $k=1$ the definition of $\mathcal{L}^i_{\lambda}(\Ac)$ implies $\mathcal{L}^1_{\lambda}(\Ac)=\ker\Ac(\lambda)$. Since $\Ac(\lambda)$ and $(\Ac+\Pc)(\lambda)$ are matrices and $\Pc(\lambda)$ is a matrix of rank at most one (see Proposition \ref{DS}), the estimates
\begin{align*}
\rank(\Ac(\lambda))=\rank((\Ac+\Pc)(\lambda)-\Pc(\lambda))&\leq\rank((\Ac+\Pc)(\lambda))+\rank(\Pc(\lambda)),\\ \rank((\Ac+\Pc)(\lambda))&\leq\rank(\Ac(\lambda))+1
\end{align*}
imply $|\rank((\Ac+\Pc)(\lambda))-\rank(\Ac(\lambda))|\leq 1$ and together with the dimension formula $\dim\ker\Ac(\lambda)+\rk(\Ac(\lambda))=n$ this leads to
\begin{align}
\begin{split}
\label{kisteins}
&~~~~|\dim\ker\Ac(\lambda)-\dim\ker(\Ac+\Pc)(\lambda)|\\&=\big|n-\rank(\Ac(\lambda))-\big(n-\rk((\Ac+\Pc)(\lambda))\big)\big|\leq 1.
\end{split}
\end{align}
Therefore \eqref{trunkungl} holds for $k=1$.
For $k\geq2$ we have the identity
\[
\dim\mathcal{L}_{\lambda}^k(\Ac)=\dim\ker\Ac(\lambda)+\sum_{m=1}^{k-1}\dim\frac{\mathcal{L}^{m+1}_{\lambda}(\Ac)}{\mathcal{L}^{m}_{\lambda}(\Ac)}
\]
which leads to
\begin{align*}
&~~~~~|\dim\mathcal{L}_{\lambda}^k(\Ac)-\dim\mathcal{L}_{\lambda}^k(\Ac+\Pc)|\\ &\leq|\dim\ker\Ac(\lambda)-\dim\ker(\Ac+\Pc)(\lambda)|+\sum_{m=1}^{k-1}\left|\dim\frac{\mathcal{L}^{m+1}_{\lambda}(\Ac)}{\mathcal{L}^{m}_{\lambda}(\Ac)}-\dim\frac{\mathcal{L}^{m+1}_{\lambda}(\Ac+\Pc)}{\mathcal{L}^{m}_{\lambda}(\Ac+\Pc)}\right|
\end{align*}
and \eqref{kisteins} together with \eqref{einprostufe} imply \eqref{trunkungl}.

For $\lambda=\infty$ we consider the dual pencil $\Ac'(s)=-As+E$ at $\lambda=0$. Obviously $\Ac'(s)$ and the dual pencil $(\Ac+\Pc)'(s)$ of $(\Ac+\Pc)(s)$ are regular. By Proposition \ref{extralemma} (c), it remains to apply \eqref{einprostufe} and \eqref{trunkungl} for $\lambda=0$ to $\Ac'(s)$ and $(\Ac+\Pc)'(s)$ to see that \eqref{einprostufe} and \eqref{trunkungl} hold for $\Ac(s)$ and $(\Ac+\Pc)(s)$ at $\lambda=\infty$.
\end{proof}

For $k=1$ the inequality 
\eqref{trunkungl} leads to the following statement.
\begin{corollary}
\label{starr}
Let $\Ac(s)$ be a regular matrix pencil, then for any rank one pencil $\Pc(s)$ such that $(\Ac+\Pc)(s)$ is regular we have   	
\[
	\left\{\lambda\in\sigma(\Ac) ~\Big|~  \dim\ker\Ac(\lambda)\geq 2\right\}\subseteq\sigma(\Ac+\mathcal{P})
	\]
	and for every $\mu\in\sigma(\Ac+\Pc)\setminus\sigma(\Ac)$
	\begin{align*}
	\dim\ker(\Ac+\Pc)(\mu)=1,
	\end{align*}
	i.e., in this case, there is only one Jordan chain of length $\dim\mathcal{L}_{\mu}(\Ac+\Pc)$.
\end{corollary}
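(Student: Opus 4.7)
The corollary is essentially a direct consequence of Proposition \ref{trunk} applied in the base case $k=1$, so my plan is to derive both statements from the single inequality
\[
\bigl|\dim\ker(\Ac+\Pc)(\lambda)-\dim\ker\Ac(\lambda)\bigr|\leq 1,\qquad \lambda\in\overline{\C},
\]
which is the $k=1$ case of \eqref{trunkungl}, recalling that $\mathcal{L}_\lambda^1(\Ac)=\ker\Ac(\lambda)$ by definition. For the case $\lambda=\infty$ I would follow the device already used in the proof of Proposition \ref{trunk}: transfer to the dual pencil $\Ac'(s)=-sA+E$ at $0$ and use Proposition \ref{extralemma}(c), so that the inequality is meaningful at $\infty$ as well.

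For the first inclusion, given $\lambda\in\sigma(\Ac)$ with $\dim\ker\Ac(\lambda)\geq 2$, the triangle inequality above yields $\dim\ker(\Ac+\Pc)(\lambda)\geq 1$, which puts $\lambda$ into $\sigma(\Ac+\Pc)$.

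For the second assertion, let $\mu\in\sigma(\Ac+\Pc)\setminus\sigma(\Ac)$. Since $\mu\notin\sigma(\Ac)$, we have $\dim\ker\Ac(\mu)=0$ (again with the dual-pencil interpretation if $\mu=\infty$), hence by the inequality $\dim\ker(\Ac+\Pc)(\mu)\leq 1$. On the other hand $\mu\in\sigma(\Ac+\Pc)$ forces $\dim\ker(\Ac+\Pc)(\mu)\geq 1$, so equality holds. Using the fact recalled between Proposition \ref{extralemma} and Assumption \ref{nijann}, namely that $(\Ac+\Pc)(s)$ has exactly $\dim\ker(\Ac+\Pc)(\mu)$ linearly independent Jordan chains at $\mu$, we conclude that there is precisely one such chain at $\mu$; its length must then equal $\dim\mathcal{L}_\mu(\Ac+\Pc)$ since the root subspace decomposes as the direct sum of the subspaces spanned by a maximal set of linearly independent chains.

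I do not anticipate any real obstacle here beyond bookkeeping for $\lambda=\infty$, which is handled uniformly by passing to the dual pencil. The substance of the corollary is entirely absorbed into the $k=1$ inequality of Proposition \ref{trunk}, so the write-up should be short.
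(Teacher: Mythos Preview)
Your proposal is correct and matches the paper's approach exactly: the corollary is stated immediately after Proposition~\ref{trunk} as following from the $k=1$ case of \eqref{trunkungl}, and your derivation of both assertions from that inequality is precisely what is intended. The handling of $\lambda=\infty$ via the dual pencil is also in line with how the paper treats this point throughout.
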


Proposition \ref{trunk} states, roughly speaking, that the largest possible change in the dimensions of $\mathcal{L}_{\lambda}(\Ac+\Pc)$ compared with $\mathcal{L}_{\lambda}(\Ac)$ is bounded by the length of the largest Jordan chain of $\Ac(s)$ and $(\Ac+\Pc)(s)$. 
However, the aim of the following Theorem \ref{pert} is to give bounds for the change of dimension of $\mathcal{L}_{\lambda}(\Ac+\Pc)$ only in terms of the unperturbed pencil $\Ac(s)$. For this, we use the number $m_1(\lambda)$ which is according to Assumption \ref{nijann} the length of the largest Jordan chain of $\Ac(s)$ at $\lambda$ 
and the number
\begin{align*}
M(\Ac):=\sum_{\mu\in\sigma(\Ac)}m_1(\mu).
\end{align*}

\begin{theorem}
\label{pert}
Let $\Ac(s)$ be a regular matrix pencil satisfying Assumption \ref{nijann}. 
Then for any rank one pencil $\Pc(s)$ such that $(\Ac+\Pc)(s)$ is regular we have for $\lambda\in\sigma(\Ac)$
	\begin{align}
	\label{maximal1}
	\dim \mathcal{L}_{\lambda}(\Ac)-m_{1}(\lambda) \leq\dim \mathcal{L}_{\lambda}(\Ac+\Pc) \leq \dim \mathcal{L}_{\lambda}(\Ac)+M(\Ac)-m_{1}(\lambda),	
	\end{align}
	whereas the change in the dimension for $\lambda\in\overline{\C}\setminus\sigma(\Ac)$ is bounded by
	\begin{align}
\label{14a}
	0\leq\dim\mathcal{L}_{\lambda}(\Ac+\Pc)\leq M(\Ac).
	\end{align}
	Summing up, we obtain the following bounds
	\begin{align}
	\begin{split}
	\label{maximal3}
	\sum_{\lambda\in\sigma(\Ac)}\dim\mathcal{L}_{\lambda}(\Ac+\Pc)\geq n-M(\Ac),\\ \sum_{\lambda\in\sigma(\Ac+\Pc)\setminus\sigma(\Ac)}\dim\mathcal{L}_{\lambda}(\Ac+\Pc) \leq M(\Ac).
	\end{split}
	\end{align}
\end{theorem}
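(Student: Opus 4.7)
The plan is to first establish the lower bound in \eqref{maximal1} using Lemma \ref{DM}, then sum it over $\sigma(\Ac)$ to obtain both inequalities of \eqref{maximal3}, and finally ``invert'' these sums using \eqref{summegleichn} applied to the regular pencil $(\Ac+\Pc)(s)$ to produce the upper bounds in \eqref{maximal1} and \eqref{14a}. The guiding observation is that both $\Ac(s)$ and $(\Ac+\Pc)(s)$ are regular, so the total root subspace dimension of each equals $n$; therefore any lower bound at one eigenvalue automatically translates into complementary upper bounds elsewhere.

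First I fix $\lambda\in\sigma(\Ac)$ and invoke Lemma \ref{DM} to obtain $\dim\ker\Ac(\lambda)-1$ linearly independent Jordan chains of $(\Ac+\Pc)(s)$ at $\lambda$ of lengths $\tilde m_2(\lambda),\ldots,\tilde m_{\dim\ker\Ac(\lambda)}(\lambda)$, each dominating $m_i(\lambda)$. The total number of independent vectors contained in these chains is at least $\sum_{i=2}^{\dim\ker\Ac(\lambda)}m_i(\lambda)=\dim\mathcal{L}_\lambda(\Ac)-m_1(\lambda)$, which is the lower bound in \eqref{maximal1}. Summing over all $\lambda\in\sigma(\Ac)$ and applying \eqref{summegleichn} to $\Ac(s)$ yields the first inequality of \eqref{maximal3}, and subtracting from \eqref{summegleichn} applied to $(\Ac+\Pc)(s)$ yields the second.

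For the upper bounds I use
\[
\dim\mathcal{L}_\lambda(\Ac+\Pc)=n-\sum_{\mu\in\sigma(\Ac+\Pc),\,\mu\neq\lambda}\dim\mathcal{L}_\mu(\Ac+\Pc)
\]
and then replace the index set with $\sigma(\Ac)\setminus\{\lambda\}$; this is legitimate because any $\mu\in\sigma(\Ac)\setminus\sigma(\Ac+\Pc)$ contributes zero while discarding the terms with $\mu\in\sigma(\Ac+\Pc)\setminus\sigma(\Ac)$ only weakens the inequality in the desired direction. Applying the already-proved lower bound termwise produces the upper bound in \eqref{maximal1} when $\lambda\in\sigma(\Ac)$ and the bound in \eqref{14a} when $\lambda\in\overline{\C}\setminus\sigma(\Ac)$; in each case the final step is a one-line arithmetic check using $\sum_{\mu\in\sigma(\Ac)}m_1(\mu)=M(\Ac)$ and $\sum_{\mu\in\sigma(\Ac)}\dim\mathcal{L}_\mu(\Ac)=n$.

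The hard part is really just bookkeeping: verifying that the linearly independent Jordan chains supplied by Lemma \ref{DM} do contribute independent vectors to the root subspace in the summed count. This is built into the definition of linear independence of Jordan chains used in Lemma \ref{DM}, so no new argument is needed. The case $\lambda=\infty$ is dispatched as in the proof of Proposition \ref{trunk} by passing to the dual pencil via Proposition \ref{extralemma}(c).
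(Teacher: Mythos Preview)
Your proof is correct and follows essentially the same architecture as the paper's: establish the lower bound in \eqref{maximal1}, then use \eqref{summegleichn} for both regular pencils $\Ac(s)$ and $(\Ac+\Pc)(s)$ to convert this into the summed bounds \eqref{maximal3} and the complementary upper bounds in \eqref{maximal1} and \eqref{14a}. The only difference is in how the lower bound is obtained: the paper routes it through Proposition~\ref{trunk}, invoking \eqref{trunkungl} with $k=m_1(\lambda)$ and the identity $\mathcal{L}_\lambda(\Ac)=\mathcal{L}_\lambda^{m_1(\lambda)}(\Ac)$, whereas you appeal directly to Lemma~\ref{DM} and count the vectors in the $\dim\ker\Ac(\lambda)-1$ surviving chains. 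Your route is marginally more direct, since Proposition~\ref{trunk} is itself proved from Lemma~\ref{DM}; the paper's detour has the advantage of making the role of the stepwise bound \eqref{trunkungl} explicit. Note also that Lemma~\ref{DM} is already stated for $\lambda\in\overline{\C}$, so your separate remark about passing to the dual pencil for $\lambda=\infty$ is unnecessary (though harmless).
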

\begin{proof}
By Assumption \ref{nijann}, we have $\mathcal{L}_{\lambda}(\Ac)=\mathcal{L}_{\lambda}^{m_1(\lambda)}(\Ac)$. Then \eqref{trunkungl} implies for $\lambda\in\sigma(\Ac)$
 \begin{align}
 \begin{split}
 \label{thebound}
\dim \mathcal{L}_{\lambda}(\Ac)-m_1(\lambda) &=\dim \mathcal{L}_{\lambda}^{m_1(\lambda)}(\Ac)-m_1(\lambda) \\ &\leq  \dim \mathcal{L}_{\lambda}^{m_1(\lambda)}(\Ac+\Pc)\leq \dim \mathcal{L}_{\lambda}(\Ac+\Pc).
\end{split}
 \end{align}
 This is the lower bound in \eqref{maximal1}. Since $(\Ac+\Pc)(s)$ is regular we can apply \eqref{summegleichn}, \eqref{thebound} and the upper bound for $\lambda\in\sigma(\Ac)$ follows from
 \begin{align*}
 \dim\mathcal{L}_{\lambda}(\Ac+\Pc) &=n-\sum_{\mu\in\sigma(\Ac+\Pc)\setminus\{\lambda\}} \dim\mathcal{L}_{\mu}(\Ac+\Pc)\\&\leq\sum_{\mu\in\sigma(\Ac)} \dim\mathcal{L}_{\mu}(\Ac)-\sum_{\mu\in\sigma(\Ac)\setminus\{\lambda\}} \dim\mathcal{L}_{\mu}(\Ac+\Pc)\\
 &= \dim\mathcal{L}_{\lambda}(\Ac)+\sum_{\mu\in\sigma(\Ac)\setminus\{\lambda\}}\dim\mathcal{L}_{\mu}(\Ac)-\sum_{\mu\in\sigma(\Ac)\setminus\{\lambda\}}\dim\mathcal{L}_{\mu}(\Ac+\Pc)\\
 &\leq  \dim\mathcal{L}_{\lambda}(\Ac)+\sum_{\mu\in\sigma(\Ac)\setminus\{\lambda\}}m_1(\mu)=\dim\mathcal{L}_{\lambda}(\Ac)+M(\Ac)-m_1(\lambda).
 \end{align*}
 Hence \eqref{maximal1} is proved and applying the same estimates for $\lambda\in\overline{\C}\setminus\sigma(\Ac)$  proves \eqref{14a}. We continue with the proof of \eqref{maximal3}.
 Relation \eqref{thebound} implies
 \[
 \sum_{\lambda\in\sigma(\Ac)}\dim\mathcal{L}_{\lambda}(\Ac+\Pc)\geq\sum_{\lambda\in\sigma(\Ac)}(\dim\mathcal{L}_{\lambda}(\Ac)-m_1(\lambda))=n-\sum_{\lambda\in\sigma(\Ac)}m_1(\lambda)=n-M(\Ac)
 \]
 and this yields
 \[
 \sum_{\lambda\in\sigma(\Ac+\Pc)\setminus\sigma(\Ac)}\dim\mathcal{L}_{\lambda}(\Ac+\Pc)=n-\sum_{\lambda\in\sigma(\Ac)}\dim\mathcal{L}_{\lambda}(\Ac+\Pc)
 \leq M(\Ac).
 \]
\end{proof}

From the inequality \eqref{maximal3} we see that the number of changeable eigenvalues under a rank one perturbation is bounded by $M(\Ac)$.

\section{Eigenvalue placement with rank one perturbations}
\label{mainsection}

In this section we study which sets of eigenvalues can be obtained by rank one perturbations.
The following theorem is the main result. It states that for a given set of complex numbers there exists a rank one perturbation $\Pc(s)$ such that the set is included in  $\sigma(\Ac+\Pc)$, provided the given set has not more than $M(\Ac)$ elements.
\begin{theorem}
\label{hauptsatz}
Let $\Ac(s)$ be a regular matrix pencil satisfying Assumption \ref{nijann} and choose  
 pairwise distinct numbers $\mu_1,\ldots,\mu_l\in\overline{\C}$ with $l\leq M(\Ac)$. Choose multiplicities $m_1,\ldots,m_l\in\N\setminus\{0\}$ with  $\sum_{i=1}^lm_i=M(\Ac)$. Then
the following statements hold true.
\begin{itemize}
\item[\rm (a)] There exists a rank one pencil $\mathcal{P}(s)=(\alpha s-\beta)uv^*$ with $\alpha\in\C\setminus\{0\}$, $\beta\in\C$ and $u,v\in\C^n$ such that $(\Ac+\Pc)(s)$ is regular,  and
\begin{align}
\label{mualternat}
\sigma(\Ac+\Pc)=\{\mu_1,\ldots,\mu_l\}\cup\{\lambda\in\sigma(\Ac) ~|~ \dim\ker\Ac(\lambda)\geq 2\}
\end{align}
with multiplicities
\begin{align}
\label{main2}
\dim \mathcal{L}_{\lambda}(\Ac+\Pc)=\begin{cases}\dim \mathcal{L}_{\lambda}(\Ac)-m_1(\lambda)+m_i, & \text{for $\lambda=\mu_i\in\sigma(\Ac)$,} \\
\dim \mathcal{L}_{\lambda}(\Ac)-m_1(\lambda), & \text{for $\lambda\in\sigma(\Ac)\setminus\{\mu_1,\ldots,\mu_l\}$,} \\
m_i, & \text{for $\lambda=\mu_i\notin\sigma(\Ac)$,} \\
0, & \text{for $\lambda\notin\sigma(\Ac)\cup\{\mu_1,\ldots,\mu_l\}$.}
\end{cases}
\end{align}
\item[\rm (b)] If $E$, $A$ are real matrices and  $\{\mu_1,\ldots,\mu_l\}$ is symmetric with respect to the real line with $m_i=m_j$ if $\mu_j=\overline{\mu_i}$ and all $i,j=1,\ldots,l$,  
there exists $\alpha,\beta\in\R$ and $u,v\in\R^n$ such that $\mathcal{P}(s)=(\alpha s-\beta)uv^T$ satisfies  \eqref{mualternat} and \eqref{main2}.
\end{itemize}
\end{theorem}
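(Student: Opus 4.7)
The plan is to reduce the problem in stages to a canonical situation and then solve it by an explicit construction.

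First, I apply the Weierstra\ss{} canonical form. For invertible $S, T \in \C^{n \times n}$ one has $S \Pc(s) T = (\alpha s - \beta)(Su)(T^*v)^*$, so the class of rank-one perturbations of the form \eqref{dasP} is preserved under left/right multiplication by invertible matrices, and so are the conclusions \eqref{mualternat}--\eqref{main2}. I may therefore assume $\Ac(s)$ is already in the form \eqref{wnf}, split into Jordan-type blocks $sI_k - J_k(\lambda)$ at finite $\lambda$ and $sN_k - I_k$ at $\infty$, with the longest chain at each eigenvalue listed first, in accordance with Assumption \ref{nijann}.

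Next, I decompose $\Ac(s) = \Ac_c(s) \oplus \Ac_r(s)$, where the \emph{cyclic core} $\Ac_c(s)$ collects the longest Jordan block at each eigenvalue of $\Ac(s)$, so that $\Ac_c(s)$ has size $M(\Ac)$ and $\dim \ker \Ac_c(\lambda) \leq 1$ for every $\lambda$, while $\Ac_r(s)$ collects the remaining shorter blocks. By Proposition \ref{extralemma}(d) the characteristic polynomial of $\Ac_r$ is a nonzero constant multiple of $q(s)$, and $\sigma(\Ac_r) = \{\lambda \in \sigma(\Ac) : \dim \ker \Ac(\lambda) \geq 2\}$. I look for $\Pc$ with $u = \begin{pmatrix}u_c\\0\end{pmatrix}$ and $v = \begin{pmatrix}v_c\\0\end{pmatrix}$, so that $(\Ac + \Pc)(s) = (\Ac_c + \Pc_c)(s) \oplus \Ac_r(s)$; the residual then contributes its eigenvalues with the multiplicities demanded by the second and fourth cases of \eqref{main2}, and the problem reduces to realising the multiset $\{\mu_i\}$, with multiplicities summing to $M(\Ac)$, as the entire spectrum of a rank-one perturbation of the cyclic pencil $\Ac_c(s)$.

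For the cyclic case I use a M\"obius substitution to handle $\infty$. Pick $s_0 \in \C$ outside $\sigma(\Ac_c) \cup \{\mu_1,\dots,\mu_l\}$ and replace $(\Ac_c(s), \Pc_c(s))$ by the pair $(\tilde\Ac_c(t), \tilde\Pc_c(t))$ obtained by substituting $s = s_0 + 1/t$ and multiplying by $t$. A direct computation gives $\tilde\Ac_c(t) = t(s_0 E_c - A_c) + E_c$ with invertible leading coefficient (so $\infty \notin \sigma(\tilde\Ac_c)$) and $\tilde\Pc_c(t) = ((\alpha s_0 - \beta)t + \alpha)uv^*$, which is again rank-one of the form \eqref{dasP}. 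Multiplying from the left by the inverse of the new leading coefficient reduces to a pencil $tI_{M(\Ac)} - \tilde J_c$ with $\tilde J_c$ cyclic; a further similarity brings $\tilde J_c$ into companion form $C(p)$. The classical fact that $C(p) + e_{M(\Ac)} v_c^*$ has any prescribed monic characteristic polynomial of degree $M(\Ac)$, for a suitable $v_c$ obtained as coefficient differences, lets me match the M\"obius image of $\prod_i (s - \mu_i)^{m_i}$. Unwinding the changes of variable gives a rank-one perturbation of $\Ac(s)$ of the required form; choosing the pure ``matrix'' perturbation in the tilde variables ($\tilde\alpha = 0$, $\tilde\beta = -1$) yields $\alpha = 1 \neq 0$ and $\beta = s_0$ in the original variables, as required by (a).

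The main obstacle is the coordinated control of finite and infinite target eigenvalues together with the constraint $\alpha \neq 0$; the M\"obius step moves everything to the finite plane and isolates the remaining task as classical cyclic-matrix pole assignment. For (b), the symmetry hypothesis on $\{\mu_i\}$ makes $\prod_i (s - \mu_i)^{m_i}$ a real polynomial; working with the real Weierstra\ss{} form, the real companion matrix, and $s_0 \in \R$ keeps $\alpha, \beta, u, v$ real throughout.
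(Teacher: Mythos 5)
Your argument is correct, but it takes a genuinely different route from the paper's. The paper proves part (a) analytically: it fixes $\beta/\alpha\notin\sigma(\Ac)\cup\{\mu_1,\ldots,\mu_l\}$, writes $\det(\Ac+\Pc)(s)=\det\Ac(s)\bigl(1+(\alpha s-\beta)v^*(sE-A)^{-1}u\bigr)$ via Sylvester's identity, and reduces everything to Lemma \ref{surjective} --- the surjectivity of $(u,v)\mapsto v^*m_{\Ac}(s)(sE-A)^{-1}u$ onto polynomials of degree at most $M(\Ac)-1$ --- which is established by an explicit partial-fraction computation in Weierstra\ss\ coordinates. You instead argue structurally: split off the cyclic core $\Ac_c$ of size $M(\Ac)$ (one longest block per eigenvalue), support the perturbation on it so that the residual blocks contribute exactly the second and fourth lines of \eqref{main2}, move $\infty$ into the finite plane by the substitution $s=s_0+1/t$, and finish with companion-form pole assignment for the nonderogatory matrix $-(s_0E_c-A_c)^{-1}E_c$. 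The facts you leave implicit (that the M\"obius substitution preserves regularity, geometric multiplicities and root-subspace dimensions, and that a nonderogatory matrix is similar to a companion matrix) are standard and follow, for instance, from $\det\tilde\Ac_c(t)=t^{M(\Ac)}\det\Ac_c(s_0+1/t)$ together with Proposition \ref{extralemma}(d); your final normalization $\alpha=1$, $\beta=s_0$ even coincides with the paper's condition $\beta/\alpha\notin\sigma(\Ac)\cup\{\mu_1,\ldots,\mu_l\}$, and the real case goes through as you say. As for what each approach buys: your reduction makes it transparent why precisely $M(\Ac)$ eigenvalues are assignable (it is the size of the cyclic core) and rests only on classical matrix pole placement, whereas the paper's Lemma \ref{surjective} isolates the achievable polynomials in a form that the authors reuse almost verbatim for the parameter-restricted placement of Theorem \ref{respert} and Section \ref{Sapl}, where $u$ and $v$ are prescribed and your freedom to concentrate the perturbation on the cyclic core is no longer available.
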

We formulate a special case of Theorem \ref{hauptsatz}.
\begin{corollary}
\label{furinvers}
In addition to the assumptions of Theorem \ref{hauptsatz}, assume that $\dim\ker\Ac(\lambda)=1$ for all $\lambda\in\sigma(\Ac)$. Hence  $m_1(\lambda)=\dim\mathcal{L}_{\lambda}(\Ac)$ holds for all $\lambda\in\sigma(\Ac)$ and $M(\Ac)=n$. Then there exists a rank one pencil $\Pc(s)=(\alpha s-\beta)uv^*$ such that $(\Ac+\Pc)(s)$ is regular and the equations \eqref{mualternat} and \eqref{main2} take the following form
\[
\sigma(\Ac+\Pc)=\{\mu_1,\ldots,\mu_l\}\quad
\text{and}\quad
\dim \mathcal{L}_{\lambda}(\Ac+\Pc)=\begin{cases} m_i, & \text{for $\lambda=\mu_i$,}  \\
0, & \text{for $\lambda\notin\{\mu_1,\ldots,\mu_l\}$.}
\end{cases}
\]
Therefore, for each $\mu_i\in\sigma(\Ac+\Pc)$ there is only one Jordan chain of $(\Ac+\Pc)(s)$ of length $m_i$.
\end{corollary}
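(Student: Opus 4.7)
The plan is to obtain this corollary as a direct specialization of Theorem \ref{hauptsatz}(a) after simplifying the hypotheses. The first step is to check that the extra assumption forces $M(\Ac)=n$: since $\dim\ker\Ac(\lambda)=1$ for every $\lambda\in\sigma(\Ac)$, the pencil has exactly one Jordan chain at each eigenvalue, so $\dim\mathcal{L}_\lambda(\Ac)=m_1(\lambda)$, and summing over $\sigma(\Ac)$ and applying \eqref{summegleichn} yields $M(\Ac)=\sum_{\lambda\in\sigma(\Ac)}m_1(\lambda)=n$. Hence the condition $\sum_{i=1}^l m_i=M(\Ac)$ reduces to $\sum_i m_i=n$ and the assumption $l\leq M(\Ac)$ becomes $l\leq n$.

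Second, I apply Theorem \ref{hauptsatz}(a) to the chosen $\mu_1,\dots,\mu_l$ with multiplicities $m_1,\dots,m_l$. This produces a rank one pencil $\Pc(s)=(\alpha s-\beta)uv^*$ with $(\Ac+\Pc)(s)$ regular, satisfying \eqref{mualternat} and \eqref{main2}. Under our hypothesis the auxiliary set $\{\lambda\in\sigma(\Ac):\dim\ker\Ac(\lambda)\geq 2\}$ appearing in \eqref{mualternat} is empty, so $\sigma(\Ac+\Pc)=\{\mu_1,\dots,\mu_l\}$. Substituting $\dim\mathcal{L}_\lambda(\Ac)=m_1(\lambda)$ into the four cases of \eqref{main2} collapses the first two into $m_i$ at $\lambda=\mu_i\in\sigma(\Ac)$ and $0$ at $\lambda\in\sigma(\Ac)\setminus\{\mu_1,\dots,\mu_l\}$, so the eigenvalues of $\Ac$ not among the $\mu_i$ disappear from $\sigma(\Ac+\Pc)$; the remaining two cases give $m_i$ at new $\mu_i$ and $0$ elsewhere, which is exactly the multiplicity formula in the statement.

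The final assertion that each $\mu_i$ carries only one Jordan chain of length $m_i$ reduces to proving $\dim\ker(\Ac+\Pc)(\mu_i)=1$, because then the root subspace of dimension $m_i$ is spanned by a single chain of length $m_i$. For $\mu_i\notin\sigma(\Ac)$ this is immediate from Corollary \ref{starr}. For $\mu_i\in\sigma(\Ac)$ the inequality \eqref{kisteins} together with $\dim\ker\Ac(\mu_i)=1$ yields only $\dim\ker(\Ac+\Pc)(\mu_i)\leq 2$, and excluding the value two is the main obstacle; I expect this to follow from the explicit construction of $\Pc$ in the proof of Theorem \ref{hauptsatz}, which is arranged to concentrate the prescribed multiplicity $m_i$ into one extended chain at $\mu_i$ rather than splitting it, so that the single chain of $\Ac$ at $\mu_i$ is either destroyed or lengthened rather than accompanied by a new independent chain.
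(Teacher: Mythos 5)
Your reduction of \eqref{mualternat} and \eqref{main2} to the stated formulas is correct and is exactly how the paper treats this corollary (the paper offers no separate proof, presenting it as an immediate specialization of Theorem \ref{hauptsatz}): the auxiliary set $\{\lambda\in\sigma(\Ac)\mid\dim\ker\Ac(\lambda)\geq 2\}$ is empty, $\dim\mathcal{L}_\lambda(\Ac)=m_1(\lambda)$ collapses the case distinction, and \eqref{summegleichn} gives $M(\Ac)=n$. The handling of new eigenvalues via Corollary \ref{starr} is also fine.

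The one genuine gap is the single-chain assertion at $\mu_i\in\sigma(\Ac)$, and you are right to be uneasy there: it truly does not follow from the \emph{statement} of Theorem \ref{hauptsatz}. A rank one perturbation realizing \eqref{mualternat} and \eqref{main2} need not have a single chain at a surviving eigenvalue: for $\Ac(s)=sI_2-J_2(0)$ and the constant rank one pencil $\Pc(s)=e_1e_2^*$ one gets $(\Ac+\Pc)(s)=sI_2$, which has the prescribed spectrum $\{0\}$ and root subspace dimension $2$ but two chains of length one. So "I expect this to follow from the explicit construction" is precisely the step that must be supplied, and it can be, as follows. In the proof of Lemma \ref{surjective} the vector $v$ is chosen so that $v^*T$ has the entry $1$ in the position of the first row of the leading Jordan block at each eigenvalue; by Proposition \ref{extralemma}\,(a) the eigenvector $g_0$ of $\Ac(s)$ at a finite $\mu_i\in\sigma(\Ac)$ is $T(h_0^*,0)^*$ with $h_0$ the first unit vector of that block, whence $v^*g_0\neq 0$. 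Now if $x\in\ker(\Ac+\Pc)(\mu_i)$ satisfies $v^*x=0$, then $(\mu_iE-A)x=0$, so $x\in\ker\Ac(\mu_i)=\Span\{g_0\}$ and $v^*x=0$ forces $x=0$; hence $\ker(\Ac+\Pc)(\mu_i)\cap\ker v^*=\{0\}$ and, since $\ker v^*$ has codimension one, $\dim\ker(\Ac+\Pc)(\mu_i)\leq 1$. For $\mu_i=\infty$ one repeats this with the dual pencil via Proposition \ref{extralemma}\,(c). With $\dim\ker(\Ac+\Pc)(\mu_i)=1$ the root subspace of dimension $m_i$ is indeed a single chain of length $m_i$, which completes the argument you left open.
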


Combining Theorem \ref{pert}, Theorem \ref{hauptsatz} and Corollary \ref{furinvers} we get the following result which solves an inverse problem which was investigated for matrices in \cite{K92}.
\begin{theorem}
Given pairwise distinct numbers $\lambda_1,\ldots,\lambda_k\in\overline{\C}$ and $\mu_1,\ldots,\mu_l\in\overline{\C}$ with $k\leq n$, $l\leq n$ and multiplicities $m(\lambda_1),\ldots, m(\lambda_k)$, $m(\mu_1),\ldots,m(\mu_l)
\in\N\setminus\{0\}$
such that
\begin{align*}
    \sum_{i=1}^km(\lambda_i)=\sum_{i=1}^lm(\mu_i)=n.
 \end{align*}
Then, there exists a regular matrix pencil $\Ac(s)\in\C[s]^{n\times n}$ and a rank one pencil $\Pc(s)\in\C[s]^{n\times n}$ such that
\begin{eqnarray*}
\sigma(\Ac)=\{\lambda_1,\ldots,\lambda_k\}, & \quad \quad  & \dim\mathcal{L}_{\lambda_i}(\Ac)=m(\lambda_i),\ i=1,\ldots,k,\\
\sigma(\Ac+\Pc)=\{\mu_1,\ldots,\mu_l\}, &\quad \quad  &   \dim\mathcal{L}_{\mu_i}(\Ac+\Pc)=m(\mu_i),\ i=1,\ldots,l.
\end{eqnarray*}
\end{theorem}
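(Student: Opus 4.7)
The plan is to build $\Ac(s)$ explicitly and then apply Corollary \ref{furinvers}. To construct $\Ac(s)$, I would prescribe its Weierstra\ss\ canonical form \eqref{wnf} directly: take $S=T=I_n$, and to each finite $\lambda_i$ assign a single Jordan block of size $m(\lambda_i)$ with eigenvalue $\lambda_i$ in the $J$-part, while, if $\infty$ occurs among the $\lambda_i$, a single nilpotent Jordan block of size $m(\infty)$ is placed in the $N$-part. The resulting pencil $\Ac(s)=sE-A$ is regular since its characteristic polynomial is, up to a nonzero constant, $\prod_{\lambda_i\neq\infty}(s-\lambda_i)^{m(\lambda_i)}$. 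By Proposition \ref{extralemma} we immediately obtain $\sigma(\Ac)=\{\lambda_1,\ldots,\lambda_k\}$ together with $\dim\mathcal{L}_{\lambda_i}(\Ac)=m(\lambda_i)$ for every $i$.

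Next I would verify the hypotheses of Corollary \ref{furinvers}. By construction, each eigenvalue of $\Ac(s)$ carries exactly one Jordan block, hence $\dim\ker\Ac(\lambda_i)=1$ and $m_1(\lambda_i)=\dim\mathcal{L}_{\lambda_i}(\Ac)=m(\lambda_i)$ for every $i$. Summing gives
\[
M(\Ac)=\sum_{i=1}^k m_1(\lambda_i)=\sum_{i=1}^k m(\lambda_i)=n.
\]
The prescribed target data $\mu_1,\ldots,\mu_l$ with multiplicities $m(\mu_1),\ldots,m(\mu_l)$ satisfies $l\leq n=M(\Ac)$ and $\sum_{j=1}^l m(\mu_j)=n=M(\Ac)$, which are exactly the input hypotheses of Corollary \ref{furinvers}.

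Applying Corollary \ref{furinvers} to this $\Ac(s)$ then produces a rank one pencil $\Pc(s)=(\alpha s-\beta)uv^*$ such that $(\Ac+\Pc)(s)$ is regular, $\sigma(\Ac+\Pc)=\{\mu_1,\ldots,\mu_l\}$, and $\dim\mathcal{L}_{\mu_j}(\Ac+\Pc)=m(\mu_j)$ for $j=1,\ldots,l$. This is precisely the assertion of the theorem, so the proof would be complete.

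The main obstacle here is essentially non-existent: all the substantive work has already been absorbed into Theorem \ref{pert}, Theorem \ref{hauptsatz} and Corollary \ref{furinvers}. The only point requiring a little care is the treatment of $\infty$: one must place the possible Jordan block at $\infty$ in the nilpotent $N$-part of the Weierstra\ss\ form when constructing $\Ac(s)$, and rely on the freedom $\alpha\neq 0$ in the corollary (together with Lemma \ref{zusatzlemma}) to either remove $\infty$ from $\sigma(\Ac)$ or install it as a new eigenvalue, depending on whether $\infty$ belongs to $\{\mu_1,\ldots,\mu_l\}$.
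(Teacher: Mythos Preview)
Your proposal is correct and follows precisely the route the paper intends: the paper presents this theorem without an explicit proof, merely saying it is obtained by ``combining Theorem \ref{pert}, Theorem \ref{hauptsatz} and Corollary \ref{furinvers}'', and your argument (build $\Ac(s)$ with one Jordan block per eigenvalue so that $M(\Ac)=n$, then invoke Corollary \ref{furinvers}) is exactly the intended filling-in of that sentence.
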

\begin{lemma}
\label{surjective}
Let $\Ac(s)$ be a regular matrix pencil satisfying Assumption \ref{nijann}. Then for the polynomial $m_{\Ac}(s)$ defined in \eqref{minpol}
and for every $p(s)\in\C[s]$ with
\[
\deg p\leq M(\Ac)-1
\]
there exist $u,v\in\C^n$ such that
\begin{align}
\label{polDS}
p(s)=v^*m_{\Ac}(s)(sE-A)^{-1}u.
\end{align}
If $E$ and $A$ are real matrices and $p(s)\in\R[s]$, then there exists $u,v\in\R^{n}$ satisfying \eqref{polDS}.
\end{lemma}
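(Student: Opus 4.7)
The plan is to pass to the Weierstra\ss{} canonical form \eqref{wnf}, split the expression according to the $J$- and $N$-blocks, analyse each block separately, and recombine. From \eqref{wnf} one has $(sE-A)^{-1}=T\,\mathrm{diag}((sI_r-J)^{-1},(sN-I_{n-r})^{-1})\,S$, and after the linear change of variables $\tilde u:=Su$, $\tilde v:=T^{*}v$ (a bijection on $\C^{n}\times\C^{n}$) the equation \eqref{polDS} becomes
\[
p(s)=\tilde v_1^{*}\,m_{\Ac}(s)(sI_r-J)^{-1}\tilde u_1 \;+\; \tilde v_2^{*}\,m_{\Ac}(s)(sN-I_{n-r})^{-1}\tilde u_2,
\]
with $\tilde u=(\tilde u_1,\tilde u_2)\in\C^{r}\oplus\C^{n-r}$ and analogously for $\tilde v$. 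So it suffices to attain every polynomial of degree at most $M(\Ac)-1$ as such a sum.

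For the first term I would put $J$ in Jordan canonical form. The explicit identity
\[
(s-\lambda)^{m_1(\lambda)}\bigl(sI_{m_1(\lambda)}-J_{m_1(\lambda)}(\lambda)\bigr)^{-1}=\sum_{j=0}^{m_1(\lambda)-1}(s-\lambda)^{m_1(\lambda)-1-j}\,N^{j}
\]
shows, by reading off the first row, that choosing $\tilde v_1$ to be the leading standard basis vector inside the largest Jordan block at a finite eigenvalue $\lambda$ and letting $\tilde u_1$ vary on that same block produces exactly the polynomials $q_{\lambda}(s)\,p(s)$, where $q_{\lambda}(s):=m_{\Ac}(s)/(s-\lambda)^{m_1(\lambda)}$ and $p$ runs through polynomials of degree $\le m_1(\lambda)-1$. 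A Chinese-remainder argument (the factor $(s-\mu)^{m_1(\mu)}$ divides $q_{\lambda}$ for $\mu\ne\lambda$ but is coprime to $q_{\mu}$) shows that the subspaces $V_{\lambda}:=\{q_{\lambda}(s)\,p(s):\deg p\le m_1(\lambda)-1\}$ are linearly independent, and the dimension count $\sum_{\lambda}m_1(\lambda)=\deg m_{\Ac}$ gives $\bigoplus_{\lambda}V_{\lambda}=\{r\in\C[s]:\deg r\le\deg m_{\Ac}-1\}$. Block-diagonality of $J$ then lets me realise any element of the direct sum by taking $\tilde v_1$ and $\tilde u_1$ to be the sums of the block-supported vectors produced for each $\lambda$.

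For the second term, $(sN-I_{n-r})^{-1}=-\sum_{j=0}^{m_1(\infty)-1}s^{j}N^{j}$, and the same leading-row trick applied to the largest nilpotent Jordan block of $N$ realises every polynomial of the form $m_{\Ac}(s)\tilde p(s)$ with $\deg\tilde p\le m_1(\infty)-1$. Any target $p$ with $\deg p\le M(\Ac)-1$ decomposes uniquely by Euclidean division as $p(s)=q(s)+m_{\Ac}(s)\tilde p(s)$ with $\deg q\le\deg m_{\Ac}-1$ and $\deg\tilde p\le m_1(\infty)-1$, so the $J$-block produces $q$, the $N$-block produces $m_{\Ac}\tilde p$, and since they act on disjoint components of $\tilde u$ and $\tilde v$ the combined choice yields $p$. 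For the real case the same argument is run inside the real Weierstra\ss{} canonical form with $S,T\in\R^{n\times n}$ and $J$ in real Jordan form; real eigenvalues are treated exactly as above, while each complex-conjugate pair $\lambda,\bar\lambda$ gives a $2m_1(\lambda)\times 2m_1(\lambda)$ real block on which the first-row argument still succeeds after an elementary invertibility verification. I expect this last piece of book-keeping for the real Jordan blocks attached to complex-conjugate pairs to be the main technicality to handle cleanly.
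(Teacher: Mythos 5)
Your proposal is correct and follows essentially the same route as the paper: pass to the Weierstra\ss{} canonical form, read off the first row of the largest Jordan block at each eigenvalue (including the nilpotent block for $\infty$), and conclude by a linear-independence and dimension count, with the real case handled via the real Jordan form. The only cosmetic differences are that you justify linear independence by a coprimality/Chinese-remainder argument and combine the finite and infinite contributions by Euclidean division, whereas the paper argues independence of the rational functions $(s-\lambda_i)^{-r}$ via the residue theorem and simply counts the $M(\Ac)$ elements of $P_1\cup P_2$.
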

\begin{proof}
We introduce
\[
\Theta_{\Ac}:\C^n\times\C^n\rightarrow \big\{p(s)\in\C[s] ~|~ \deg p\leq M(\Ac)-1\big\},\quad (u,v)\mapsto v^*m_{\Ac}(s)(sE-A)^{-1}u
\]
and show the surjectivity of this map. Since the surjectivity of $\Theta_{\Ac}$ is invariant under equivalence transformations of the form of $sE-A$ to $S(sE-A)T$ with invertible $S,T\in\C^{n\times n}$, we can assume that $\Ac(s)$ is given in Weierstra\ss\ canonical form \eqref{wnf} with matrices $J$ and $N$. If $\sigma(J)=\{\lambda_1,\ldots,\lambda_m\}$ for some complex numbers  $\lambda_1,\ldots,\lambda_m$, then $J$ and $N$ are given by
\begin{align}
\label{JN}
J=\bigoplus_{i=1}^{m}\bigoplus_{j=1}^{\dim\ker\Ac(\lambda_i)}J_{m_j(\lambda_i)}(\lambda_i),\quad N=\bigoplus_{j=1}^{\dim\ker\Ac(\infty)}J_{m_j(\infty)}(0)
\end{align}
with Jordan blocks $J_{k}(\lambda)$ of size $k$ at $\lambda\in\C$ given by
\[
J_{k}(\lambda)=\begin{pmatrix}\lambda & 1 &  & & \\ & \cdot & \cdot  &  &  \\ &&\cdot&\cdot& \\ & & &\cdot &1 \\ &  &   & & \lambda \end{pmatrix}\in\C^{k\times k}.
\]
This allows us to simplify the resolvent representation with $u=(u_0^*,u_1^*)^*$,  $v=(v_0^*,v_1^*)^*$, $u_0,v_0\in\C^r$ and $u_1,v_1\in\C^{n-r}$ to
\begin{align}
& ~~~~v^*m_{\Ac}(s)(sE-A)^{-1}u =v_0^*m_{\Ac}(s)(sI_r-J)^{-1}u_0+v_1^*m_{\Ac}(s)(sN-I_{n-r})^{-1}u_1 \nonumber\\[1ex] &=v_0^*m_{\Ac}(s)\bigoplus_{\tiny\begin{matrix}i=1,\ldots,m,\\ j=1,\ldots,\dim\ker\Ac(\lambda_i)\end{matrix}}\begin{pmatrix}(s-\lambda_i)^{-1} &
\ldots & (-1)^{-m_j(\lambda_i)+1}(s-\lambda_i)^{-m_j(\lambda_i)}
\\
  & \ddots & \vdots \\  &  & (s-\lambda_i)^{-1}\end{pmatrix}u_0\nonumber \\[1ex]&~~~+v_1^*m_{\Ac}(s)\bigoplus_{ j=1,\ldots,\dim\ker\Ac(\infty)}\begin{pmatrix}-1 & -s & \ldots & -s^{m_j(\infty)-1} \\  & -1 & \ldots & -s^{m_j(\infty)-2} \\
&  & \ddots & \vdots \\  &  &  & -1\end{pmatrix}u_1.\label{resJ}
\end{align}
Observe $M(\Ac)=\deg m_{\Ac}+m_1(\infty)$. From \eqref{wirklichnijann} and \eqref{minpol} we see that $\Theta_{\Ac}$ maps into the set
\begin{align*}
\{p(s)\in\C[s] ~|~ \deg p\leq M(\Ac)-1\}.
\end{align*}
Obviously, the right hand side of \eqref{resJ} consists of the sum of products involving two block matrices. Consider the first summand, then the  entries of the1 first row of the blocks of the block matrix for $j=1$ and $i=1,\ldots,m$ are linearly independent as they are functions with a pole in $\lambda_i$ of order from one up to $m_1(\lambda_i)$. Choosing suitable contours and applying the residue theorem, one sees that
\[
\{(s-\lambda_i)^{-r} ~|~ i=1,\ldots,m,~ r=1,\ldots,m_1(\lambda_i)\}.
\]
is also linearly independent. After multiplication with $m_{\Ac}(s)$ this set of functions remains linearly independent. Therefore the set
\begin{align*}
P_1:=\{m_{\Ac}(s)(s-\lambda_i)^{-r} ~|~ i=1,\ldots,m,~ r=1,\ldots,m_1(\lambda_i)\}
\end{align*}
is linearly independent and it contains $\sum_{i=1}^mm_1(\lambda_i)=\deg m_{\Ac}$ elements, each of degree less or equal to $\deg m_{\Ac}-1$. Moreover, for $j=1$, the entries of the first row of the blocks in the block matrix of the second summand on the right hand side of \eqref{resJ} are linearly independent and form the linearly independent set of polynomials
\begin{align*}
P_2:=\{m_{\Ac}(s)s^r ~|~ r=0,\ldots,m_1(\infty)-1\}
\end{align*}
which contains $m_1(\infty)$ elements of degree between $\deg m_{\Ac}$ and $\deg m_{\Ac}+m_1(\infty)-1=M(\Ac)-1$. Hence $P_1\cup P_2$ consists of
\begin{align}
\label{doppelherz}
\deg m_{\Ac}+m_1(\infty)=M(\Ac)
\end{align}
linearly independent elements. Furthermore, one can choose certain entries of $v=(v_0^*,v_1^*)$ as one and all others as zero such that the multiplication with $v_0^*$ and $v_1^*$ in \eqref{resJ} picks exactly  the first row of each block with $j=1$ for all $i=1,\ldots,m$. By choosing one entry of $u$ as one and all others as zero we see
\begin{align*}
P_1\cup P_2\subset \ran \Theta_{\Ac}
\end{align*}
and from the linearity of the map $u\mapsto\Theta_{\Ac}(u,v)$ with \eqref{doppelherz} the lemma is proved for matrices $E, A\in\C^{n\times n}$.

We consider the case where $E,A$ are real matrices.
Here we use the Weierstra\ss\ canonical form over $\R$, obtained in \cite{G59}, with transformation matrices $S,T\in\R^{n\times n}$. The matrix $N$ is the same as in \eqref{JN} and $J$ is in real Jordan canonical form, see \cite[Section 3.4.1]{HJ13}, 
\begin{align*}
J=\bigoplus_{\tiny \begin{matrix}\lambda\in\sigma(J),\\ \im\lambda> 0\end{matrix}}\bigoplus_{j=1}^{\dim\ker\Ac(\lambda)}J_{m_j(\lambda)}^{\R}(\lambda)\oplus\bigoplus_{\lambda\in\sigma(J)\cap\R}\bigoplus_{j=1}^{\dim\ker\Ac(\lambda)}J_{m_j(\lambda)}(\lambda),
\end{align*}
where $J_{m_j(\lambda)}(\lambda)$, $\lambda\in\sigma(\Ac)\cap\R$, are Jordan blocks of size $m_j(\lambda)$ and
$J_{l}^{\R}(\lambda)\in\R^{2l\times 2l}$ for some $l\in\N\setminus\{0\}$ is a real Jordan block at $\lambda=a+ib$ with $a\in\R$, $b>0$,
given by
\[
J_l^{\R}(\lambda):=\begin{pmatrix} C(a,b) & I_2 &  & & \\ & \cdot & \cdot  &  &  \\ &&\cdot&\cdot& \\ & & &\cdot &I_2 \\ &  &   & & C(a,b) \end{pmatrix}\in\R^{2l\times 2l},\quad C(a,b):=\begin{pmatrix}a & b \\ -b& a\end{pmatrix}\in\R^{2\times 2}.
\]
Therefore the resolvent of $J_l^{\R}(a,b)$ is given by
\begin{align}
\label{realres}
(sI_{2l}-J_l^\R(a,b))^{-1}=
\begin{pmatrix} (s-C(a,b))^{-1} & \ldots &(-1)^{l-1}(s-C(a,b))^{-l} \\
  & \ddots & \vdots \\   &  &  (s-C(a,b))^{-1}\end{pmatrix}
\end{align}
where the entries are given by
\[
(s-C(a,b))^{-k}=((s-C(a,b))^{-1})^k=\left(\frac{1}{(s-a)^2+b^2}\begin{pmatrix}s-a & b \\ -b & s-a\end{pmatrix}\right)^k,\quad k\in\N\setminus\{0\}.
\]
Using the expression \eqref{realres} instead of the blocks occuring in the block matrix in the first summand of the right hand side of  \eqref{resJ} for the non-real eigenvalues, one can define again a linearly independent set of polynomials $P_1$ by picking all first row entries. This set consists again of polynomials all of distinct degree, because the factor $((s-a)^2+b^2)^{m_1(\lambda)}$ occurs in the minimal polynomial $m_{\Ac}(s)$.
The set $P_2$ remains the same as in the complex valued case. Therefore the same arguments imply the surjectivity of $\Theta_{\Ac}$ in this case.
\end{proof}

\begin{proof}[Proof of Theorem \ref{hauptsatz}]
Choose $\alpha,\beta\in\C$ such that $\alpha\neq 0$ and
 \begin{align*}
 \frac{\beta}{\alpha}\notin\{\mu_1,\ldots,\mu_l\}\cup\sigma(\Ac)
\end{align*}
holds. We set
\[
\gamma:=m_{\Ac}(\beta/\alpha)\cdot \prod_{\mu_i\in\{\mu_1,\ldots,\mu_l\}\setminus\{\infty\}}(\beta/\alpha-\mu_i)^{-m_i}.
\]
The condition $\beta/\alpha\notin\{\mu_1,\ldots,\mu_l\}\cup\sigma(\Ac)$ implies $m_{\Ac}(\beta/\alpha)\neq0$, hence $\gamma\neq0$. 
We consider
\[
q_{\gamma}(s):=\gamma\prod_{\mu_i\in\{\mu_1,\ldots,\mu_l\}\setminus\{\infty\}}(s-\mu_i)^{m_i}.
\]
As $\sum_{i=1}^lm_i=M(\Ac)$, the polynomial $q_{\gamma}(s)$ satisfies $\deg q_{\gamma}\leq M(\Ac)$. The degree of $m_{\Ac}(s)$ is $M(\Ac)-m_1(\infty)$ which is smaller or equal 
to $M(\Ac)$. 
From the choice of $\gamma$ we see that $(q_{\gamma}-m_{\Ac})(\beta/\alpha)=0$ holds and therefore
\begin{align*}
\frac{q_{\gamma}(s)-m_{\Ac}(s)}{\alpha s-\beta}
\end{align*}
is a polynomial of degree less or equal to $M(\Ac)-1$.
By Lemma \ref{surjective} there exist $u,v\in\C^n$ with
\begin{align*}
\frac{q_{\gamma}(s)-m_{\Ac}(s)}{\alpha s-\beta}=v^*m_{\Ac}(s)(sE-A)^{-1}u.
\end{align*}
This equation combined with Sylvester's determinant identity leads to
\begin{align}
\label{dannklar}
\frac{q_{\gamma}(s)}{m_{\Ac}(s)}=1+(\alpha s-\beta)v^*(sE-A)^{-1}u=\det(I_n+(sE-A)^{-1}(\alpha s-\beta)uv^*).
\end{align}
Now, set $\Pc(s)=(\alpha s -\beta)uv^*$. Then by Lemma \ref{zusatzlemma} $(\Ac+\Pc)(s)$ is regular and from \eqref{dannklar} we obtain 
\begin{align*}
\det(\Ac+\Pc)(s)&=\det(sE-A+(\alpha s-\beta)uv^*)\nonumber\\&=\det(sE-A)\det(I_n+(sE-A)^{-1}(\alpha s-\beta)uv^*)
\\&=\det(sE-A)\frac{q_{\gamma}(s)}{m_{\Ac}(s)}.\nonumber
\end{align*}
Since $\det(sE-A)$ is by Proposition \ref{extralemma} (d) divisible by $m_{\Ac}(s)$, \eqref{mualternat} follows. 
The equation \eqref{main2} follows from Proposition \ref{extralemma} (d) and Theorem \ref{hauptsatz} (a) is proved.
The statements in (b) follow by the same construction as above and by Lemma \ref{surjective} for real matrices.
\end{proof}


\section{Eigenvalue placement under parameter restrictions}
\label{rest}
In this section we study the eigenvalue placement under perturbations of the form
\begin{align}
    \label{pertclass}
\Pc(s)=(s u+v)w^*,\quad w\in\C^n
\end{align}
for fixed $u$ and $v$ in $\C^n$ (cf.\ Proposition \ref{DS}). Special cases of this parameter restricted placement are the pole assignment problem for linear systems under state feedback (cf.\ Section \ref{Sapl}) and also the eigenvalue placement problem for matrices under rank one matrices. Obviously, for the perturbation  \eqref{pertclass} the bounds on the multiplicities from Theorem \ref{pert} still hold. But since $u$ and $v$ are now fixed, we obtain tighter bounds which are given below. In the formulation of these bounds we introduce the number  $m_{u,v}(\lambda)$ which basically replaces $m_1(\lambda)$ in the bounds obtained in Theorem \ref{pert}.
Assume that the vector-valued function
\begin{align}
\label{poldefi}
s\mapsto(sE-A)^{-1}(s u+v)
\end{align}
has a pole at $\lambda\in\sigma(\Ac)\setminus\{\infty\}$, i.e.\ one of the entries has a pole at $\lambda$. Then, we denote by $m_{u,v}(\lambda)$ the order of $\lambda$ as a pole of \eqref{poldefi}, which is the maximal order of $\lambda$ as a pole of one of the entries. For $\lambda=\infty$ we define $m_{u,v}(\infty)$ as the order of the pole of \[s\mapsto(-sA+E)^{-1}(s v+u)\] at $s=0$.
In the case where no pole occurs we set $m_{u,v}(\lambda)=0$. Hence $m_{u,v}(\lambda)$ is defined for all $\lambda\in\sigma(\Ac)$.
Note that another way of introducing poles and their order is given by the use of the Smith-McMillan form \cite{CPSW91}, \cite[Ch. VI]{G59}, \cite[Ch. S1]{GLR09}.
\begin{proposition}
Let $\Ac(s)=sE-A$ be regular and let $\Pc(s)$ be given by \eqref{pertclass} with $u,v\in\C^n$ fixed such that $(\Ac+\Pc)(s)$ is regular. 
For \[M(\Ac,u,v):=\sum_{\mu\in\sigma(\Ac)}m_{u,v}(\mu)\]
and $\lambda\in\sigma(\Ac)$ we have
\begin{align}
\label{nurdie}
\dim\mathcal{L}_{\lambda}(\Ac)-m_{u,v}(\lambda)\leq\dim\mathcal{L}_{\lambda}(\Ac+\Pc)\leq\dim\mathcal{L}_{\lambda}(\Ac)+M(\Ac,u,v)-m_{u,v}(\lambda).
\end{align}
For $\lambda\in\overline{\C}\setminus\sigma(\Ac)$ we have
\begin{align}
\label{unddie}
0\leq\dim\mathcal{L}_{\lambda}(\Ac+\Pc)\leq M(\Ac,u,v).
\end{align}
From this we obtain
\begin{align*}
\sum_{\lambda\in\sigma(\Ac)}\dim\mathcal{L}_{\lambda}(\Ac+\Pc)\geq n-\mathcal{M}(\Ac,u,v),\\
\sum_{\lambda\in\sigma(\Ac+\Pc)\setminus\sigma(\Ac)}\dim\mathcal{L}_{\lambda}(\Ac+\Pc)\leq \mathcal{M}(\Ac,u,v).
\end{align*}
\end{proposition}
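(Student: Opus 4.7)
The plan is to lift the bookkeeping argument from the proof of Theorem \ref{pert} into the resolvent setting, replacing the global bound $m_1(\lambda)$ by the sharper pole order $m_{u,v}(\lambda)$ that exactly records how the vector function \eqref{poldefi} sees the eigenvalue $\lambda$. The central analytical tool will be Sylvester's determinant identity applied to the rank-one pencil $\Pc(s)=(su+v)w^*$.

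First I would observe that, as rational functions on $\C\setminus\sigma(\Ac)$,
$$\det(\Ac+\Pc)(s)=\det(sE-A)\bigl(1+w^*(sE-A)^{-1}(su+v)\bigr),$$
and that both sides are in fact polynomials because $(\Ac+\Pc)(s)$ is regular. For $\lambda\in\sigma(\Ac)\setminus\{\infty\}$, the left factor has a zero of order $\dim\mathcal{L}_\lambda(\Ac)$ at $\lambda$ by Proposition \ref{extralemma}(d), while the scalar function $w^*(sE-A)^{-1}(su+v)$ is a linear combination of the entries of the vector in \eqref{poldefi} and hence has pole order at $\lambda$ bounded by the maximum pole order of those entries, i.e.\ by $m_{u,v}(\lambda)$. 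The product therefore vanishes at $\lambda$ to order at least $\dim\mathcal{L}_\lambda(\Ac)-m_{u,v}(\lambda)$, which by Proposition \ref{extralemma}(d) applied to $(\Ac+\Pc)(s)$ is exactly the lower bound in \eqref{nurdie}.

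For $\lambda=\infty$ I would pass to the dual pencil $\Ac'(s)=-sA+E$: a short computation shows $(\Ac+\Pc)'(s)=\Ac'(s)+(sv+u)w^*$, i.e.\ the dual perturbation is again of the form \eqref{pertclass} but with $u$ and $v$ swapped. Since $m_{u,v}(\infty)$ is by definition the $0$-pole order of $(-sA+E)^{-1}(sv+u)$, applying the previous step to $\Ac'$ at $\lambda=0$ and transferring the result through Proposition \ref{extralemma}(c) gives the missing lower bound at infinity. Summing the lower bounds over $\mu\in\sigma(\Ac)$ and invoking \eqref{summegleichn} for $\Ac$ then yields
$$\sum_{\mu\in\sigma(\Ac)}\dim\mathcal{L}_\mu(\Ac+\Pc)\geq n-M(\Ac,u,v);$$
subtracting this from \eqref{summegleichn} applied to $\Ac+\Pc$ gives the second displayed sum inequality, and isolating one summand in the same identity produces the upper bounds in both \eqref{nurdie} and \eqref{unddie}, exactly as at the end of the proof of Theorem \ref{pert}.

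The hard part, as I see it, is not the Sylvester calculation itself but the matching at $\lambda=\infty$: one must verify that the quantity $m_{u,v}(\infty)$, introduced via the dual resolvent, is precisely what a direct Sylvester argument on $\Ac'(s)+(sv+u)w^*$ delivers at $0$, and that Proposition \ref{extralemma}(c) transports root-subspace dimensions correctly under the swap of $u$ and $v$. Once this identification is pinned down, everything else is formal accounting with $\dim\mathcal{L}_\mu$ and \eqref{summegleichn}.
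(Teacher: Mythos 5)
Your proposal is correct and follows essentially the same route as the paper: Sylvester's determinant identity, the observation that $w^*(sE-A)^{-1}(su+v)$ has pole order at most $m_{u,v}(\lambda)$ so the zero of $\det(\Ac+\Pc)(s)$ at $\lambda$ has order at least $\dim\mathcal L_\lambda(\Ac)-m_{u,v}(\lambda)$, passage to the dual pencil $\Ac'(s)+(sv+u)w^*$ for $\lambda=\infty$ (which is exactly why $m_{u,v}(\infty)$ is defined via $(-sA+E)^{-1}(sv+u)$), and the accounting with \eqref{summegleichn} for the upper bounds and the summed inequalities, just as at the end of the proof of Theorem \ref{pert}.
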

\begin{proof}
From Sylvester's formula we conclude
\[
\det(\Ac+\Pc)(s)=\det\Ac(s)(1+w^*(sE-A)^{-1}(su+v)).
\]
If $\lambda\in\sigma(\Ac)\setminus\{\infty\}$ then the characteristic polynomial of $\Ac(s)$ has at $\lambda$ a zero with multiplicity $\dim\mathcal{L}_{\lambda}(\Ac)$. Since the order of $\lambda$ as a pole of $s\mapsto 1+w^*(sE-A)^{-1}(su+v)$ is at most $m_{u,v}(\lambda)$, the multiplicity of $\lambda$ as a zero of $\det(\Ac+\Pc)(s)$, hence the dimension of $\mathcal{L}_{\lambda}(\Ac+\Pc)$, can be bounded by
\[
\dim \mathcal{L}_{\lambda}(\Ac+\Pc)\geq \dim\mathcal{L}_{\lambda}(\Ac)-m_{u,v}(\lambda).
\]
This shows the lower bound in \eqref{nurdie}. The upper bound in \eqref{nurdie} for $\lambda\in\sigma(\Ac)\setminus\{\infty\}$ and \eqref{unddie} for $\lambda\in\C\setminus\sigma(\Ac)$ are obtained in the same way as in the proof of Theorem \ref{pert}.
The estimates \eqref{nurdie} and \eqref{unddie} hold also for the dual pencil $\Ac'(s)=-sA+E$ at $\lambda=0$ and, hence, by Proposition \ref{extralemma} (c) the estimates \eqref{nurdie} and \eqref{unddie} also hold for $\Ac(s)$ at $\lambda=\infty$. Now the remaining assertions  follow in the same way as in Theorem \ref{pert}.
\end{proof}

Within these bounds we investigate the placement of the eigenvalues. 
We start with a result which can be seen as a generalization of Lemma \ref{zusatzlemma}.
\begin{lemma}
Let $\Ac(s)=sE-A$ be a regular matrix pencil satisfying Assumption \ref{nijann} and let  $u,v,w\in\C^n$ and $\Pc(s)=(su+v)w^*$, such that $(\Ac+\Pc)(s)$ is regular. Then we have
\[
\{\lambda\in\sigma(\Ac) ~|~ \dim\ker\Ac(\lambda)\geq 2\ \text{or}\ m_{u,v}(\lambda)< m_1(\lambda)\}\subseteq\sigma(\Ac+\Pc). 
\]
\end{lemma}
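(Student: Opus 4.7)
The plan is to split on the two defining conditions. If $\dim\ker\Ac(\lambda)\geq 2$, the inclusion is already delivered by Corollary \ref{starr}, which asserts precisely that every $\lambda\in\sigma(\Ac)$ with $\dim\ker\Ac(\lambda)\geq 2$ remains an eigenvalue of $(\Ac+\Pc)(s)$. So the real work is the second case: assume $\dim\ker\Ac(\lambda)=1$ and $m_{u,v}(\lambda)<m_1(\lambda)$. In this case there is exactly one Jordan chain of $\Ac(s)$ at $\lambda$, of length $m_1(\lambda)$, hence by Proposition \ref{extralemma}(d) the root subspace dimension equals $\dim\mathcal{L}_\lambda(\Ac)=m_1(\lambda)$, which is also the order of $\lambda$ as a zero of the characteristic polynomial $\det(sE-A)$.

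Next I would apply Sylvester's determinant identity to the rank-one update $\Pc(s)=(su+v)w^*$, exactly as in the proof of Theorem \ref{hauptsatz}, to get
\begin{equation*}
\det(\Ac+\Pc)(s)=\det(sE-A)\bigl(1+w^{*}(sE-A)^{-1}(su+v)\bigr).
\end{equation*}
By the very definition of $m_{u,v}(\lambda)$, the vector-valued map $s\mapsto(sE-A)^{-1}(su+v)$ has a pole at $\lambda$ of order at most $m_{u,v}(\lambda)$, and therefore so does the scalar function $s\mapsto w^{*}(sE-A)^{-1}(su+v)$. Comparing zero and pole orders at $\lambda\neq\infty$, the right-hand side has a zero at $\lambda$ of order at least $m_1(\lambda)-m_{u,v}(\lambda)\geq 1$, so $\det(\Ac+\Pc)(\lambda)=0$ and $\lambda\in\sigma(\Ac+\Pc)$.

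It remains to handle the case $\lambda=\infty$. Here I would pass to the dual pencil $\Ac'(s)=-sA+E$ and the correspondingly dualized perturbation $\Pc'(s)=(sv+u)w^{*}$, which satisfies $(\Ac+\Pc)'(s)=\Ac'(s)+\Pc'(s)$. By Proposition \ref{extralemma}(c), $\infty\in\sigma(\Ac)$ corresponds to $0\in\sigma(\Ac')$, the Jordan structure is preserved, and by the definition of $m_{u,v}(\infty)$ the order of $0$ as a pole of $(-sA+E)^{-1}(sv+u)$ equals $m_{u,v}(\infty)<m_1(\infty)$. Repeating the Sylvester argument for $\Ac'(s)$ at $s=0$ then yields $0\in\sigma(\Ac'+\Pc')$, i.e.\ $\infty\in\sigma(\Ac+\Pc)$.

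The only delicate step is verifying that the order of the pole of the scalar function $w^{*}(sE-A)^{-1}(su+v)$ at $\lambda$ is truly bounded by $m_{u,v}(\lambda)$ (rather than by $m_1(\lambda)$); this is immediate from the definition, since multiplication by the row vector $w^{*}$ on the left cannot raise the pole order of any entry of the vector $(sE-A)^{-1}(su+v)$. With that clarified, the comparison of multiplicities carries the proof through, and the rest is bookkeeping of the finite case $\lambda=\infty$ via the dual pencil.
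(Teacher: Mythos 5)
Your proof is correct and takes essentially the same route as the paper's: Corollary \ref{starr} for the case $\dim\ker\Ac(\lambda)\geq 2$, Sylvester's determinant identity together with a comparison of the zero order of $\det\Ac(s)$ at $\lambda$ against the pole order (at most $m_{u,v}(\lambda)$) of $s\mapsto 1+w^*(sE-A)^{-1}(su+v)$ for the second case, and the dual pencil for $\lambda=\infty$. The only cosmetic difference is that the paper reads off the surviving factor $(s-\lambda)$ from the factorization $\det(sE-A)=(-1)^{n-r}\det(ST)^{-1}m_{\Ac}(s)q(s)$ of Proposition \ref{extralemma}(d) instead of splitting off the subcase $\dim\ker\Ac(\lambda)=1$.
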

\begin{proof}
From Corollary \ref{starr} we deduce 
\[
\{\lambda\in\sigma(\Ac) ~|~ \dim\ker\Ac(\lambda)\geq 2\}\subseteq\sigma(\Ac+\Pc). 
\]
First, we consider the case $\lambda\in\sigma(\Ac)\setminus\{\infty\}$.
The resolvent representation \eqref{resJ} implies that  $s\mapsto(sE-A)^{-1}$ has an entry with a pole at $\lambda$ of order $m_1(\lambda)$. Hence $m_{u,v}(\lambda)\leq m_1(\lambda)$. The condition $m_{u,v}(\lambda)<m_1(\lambda)$,
Sylvester`s determinant formula, and the representation from Proposition \ref{extralemma} (d) lead to
\begin{align}
\begin{split}
\label{sylv}
\det(\Ac+\Pc)(s)&=\det(sE-A)(1+w^*(sE-A)^{-1}(su+v))\\&=(-1)^{n-r}\det(ST)^{-1}m_{\Ac}(s)q(s)(1+w^*(sE-A)^{-1}(su+v)).
\end{split}
\end{align}
Hence $\det(\Ac+\Pc)(s)$ contains the factor $(s-\lambda)$, therefore $\lambda\in\sigma(\Ac+\Pc)$ and the set inclusion is proven for $\lambda\in\sigma(\Ac)\setminus\{\infty\}$.
In the case $\lambda=\infty\in\sigma(\Ac)$, one has to apply the above arguments to the dual pencil $\Ac'(s)=-sA+E$ and $(\Ac+\Pc)'(s)=-sA+E+(sv+u)w^*$ at $\lambda=0$.
\end{proof}

\begin{theorem}
\label{respert}
Let $\Ac(s)=sE-A$ be a regular matrix pencil satisfying Assumption \ref{nijann} and let $u,v\in\C^n$ be fixed.
Choose $\mu_1,\ldots,\mu_l\in\overline{\C}$ with $l\leq M(\Ac,u,v)$ and multiplicities $m_i\in\N\setminus\{0\}$ satisfying $\sum_{i=1}^lm_i=M(\Ac,u,v)$.
If $u,v$ are linearly dependent we make the following further assumptions.
 \begin{itemize}
 \item[\rm (i)]  If $u\neq 0$ and $v=-\mu u$. Then $\mu\notin\sigma(\Ac)$ implies
 $\mu\notin\{\mu_1,\ldots,\mu_l\}$.
 \item[\rm (ii)] If $u=0$ and $v\neq0$. Then $\infty\notin\sigma(\Ac)$ implies $\infty\notin\{\mu_1,\ldots,\mu_l\}$.
 \end{itemize}
Then there exists $w\in\C^n$ such that for $\Pc(s)=(su+v)w^*$ the pencil $(\Ac+\Pc)(s)$ is regular and satisfies
    \begin{align}
        \label{lu}
    \sigma(\Ac+\Pc)=\{\mu_1,\ldots,\mu_l\}\cup\{\lambda\in\sigma(\Ac) | \dim\ker\Ac(\lambda)\geq 2\ \text{or}\ m_{u,v}(\lambda)< m_1(\lambda)\}
    \end{align} 
and the dimensions of the root subspaces are
\begin{align}
\label{mulres}
\dim \mathcal{L}_{\lambda}(\Ac+\Pc)=\begin{cases}\dim \mathcal{L}_{\lambda}(\Ac)-m_{u,v}(\lambda)+m_i, & \text{for $\lambda=\mu_i\in\sigma(\Ac)$,} \\
\dim \mathcal{L}_{\lambda}(\Ac)-m_{u,v}(\lambda), & \text{for $\lambda\in\sigma(\Ac)\setminus\{\mu_1,\ldots,\mu_l\}$,} \\
m_i, & \text{for $\lambda=\mu_i\notin\sigma(\Ac)$,} \\
0, & \text{for $\lambda\notin\sigma(\Ac)\cup\{\mu_1,\ldots,\mu_l\}$.}
\end{cases}
\end{align}
If $E,A$ are real matrices and $u,v\in\R^n$ then we can find $w\in\R^n$ such that \eqref{lu} and \eqref{mulres} hold. 
\end{theorem}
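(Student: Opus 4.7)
The plan is to adapt the argument of Theorem \ref{hauptsatz} to the restricted setting where only the vector $w$ is free. Sylvester's determinant identity gives
$$\det(\Ac+\Pc)(s)=\det(\Ac(s))\bigl(1+w^{*}(sE-A)^{-1}(su+v)\bigr),$$
so the task becomes the choice of $w$ such that the bracket equals a prescribed rational function whose zeros and poles realise \eqref{lu} and \eqref{mulres}. Since $(sE-A)^{-1}(su+v)$ has at each finite $\lambda\in\sigma(\Ac)$ a pole of order exactly $m_{u,v}(\lambda)$, I would work with the polynomial
$$\tilde m_{u,v}(s):=\prod_{\lambda\in\sigma(\Ac)\setminus\{\infty\}}(s-\lambda)^{m_{u,v}(\lambda)}$$
together with its dual-pencil counterpart at $\infty$ (just as Proposition \ref{extralemma}(c) is used in the proof of Proposition \ref{trunk}), and rephrase the identity as a polynomial equation whose unknown is $w$.

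The central ingredient, and the hard part, is a restricted analog of Lemma \ref{surjective}: for fixed $u,v\in\C^n$ the linear map
$$\C^n\ni w\longmapsto w^{*}\tilde m_{u,v}(s)(sE-A)^{-1}(su+v),$$
augmented by the analogous dual-pencil map for the $\infty$-contribution, is surjective onto the polynomials of degree at most $M(\Ac,u,v)-1$. I would mimic the Lemma \ref{surjective} proof by passing to the Weierstra\ss{} canonical form and exploiting the explicit block-resolvent formula \eqref{resJ}: the very definition of $m_{u,v}(\lambda)$ forces $(su+v)$ to have a non-vanishing component in the top row of the largest Jordan block of size $m_{u,v}(\lambda)$ at each eigenvalue, and by varying the corresponding coordinates of $w$ one extracts, after multiplication by $\tilde m_{u,v}(s)$, a family of $M(\Ac,u,v)$ polynomials of pairwise distinct degrees. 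These are linearly independent by the same residue argument as in Lemma \ref{surjective}, hence span the target space. The novelty compared to Lemma \ref{surjective} is the collapse of the two-parameter freedom to a single parameter $w$; this is compensated by the fact that the effective pole orders $m_{u,v}(\lambda)\leq m_1(\lambda)$ are tailored to the fixed $(u,v)$.

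Given the surjectivity, I would set $q_\gamma(s):=\gamma\prod_{\mu_i\neq\infty}(s-\mu_i)^{m_i}$ and, exactly as in the proof of Theorem \ref{hauptsatz}, choose $\gamma\in\C$ so that $q_\gamma-\tilde m_{u,v}$ has degree at most $M(\Ac,u,v)-1$ (when $\infty$ is among the $\mu_i$ its multiplicity cancels the degree deficit on the $\tilde m_{u,v}$ side). The restricted surjectivity then produces $w\in\C^n$ with
$$w^{*}\tilde m_{u,v}(s)(sE-A)^{-1}(su+v)=q_\gamma(s)-\tilde m_{u,v}(s),$$
and substituting back into Sylvester's identity together with the factorisation \eqref{diefaktorisierung} from Proposition \ref{extralemma}(d) yields \eqref{lu} and \eqref{mulres}. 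The restrictions (i) and (ii) are exactly those forced by Lemma \ref{zusatzlemma}: if $v=-\mu u$ with $\mu\notin\sigma(\Ac)$ then $\Pc(\mu)=0$ so $\mu\notin\sigma(\Ac+\Pc)$; if $u=0$ then the leading coefficient $E$ is unchanged and $\infty$ cannot newly appear in $\sigma(\Ac+\Pc)$. Under these assumptions the construction proceeds without modification. Finally, the real case (b) follows by invoking the real Weierstra\ss{} canonical form and running the same argument over $\R$, just as in the second half of Lemma \ref{surjective}.
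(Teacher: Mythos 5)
Your overall strategy --- Sylvester's identity, reduction to a polynomial problem for $w$, and a restricted analogue of Lemma \ref{surjective} proved via the Weierstra\ss{} form and the block resolvent \eqref{resJ} --- is the same as the paper's. But the lemma you rest everything on is false as stated. The map $w\mapsto w^{*}\widetilde m_{u,v}(s)(sE-A)^{-1}(su+v)$ is linear in $w$, so its range is an at most $n$-dimensional subspace of polynomials, and there is no reason for that subspace to be the space of polynomials of degree at most $M(\Ac,u,v)-1$; in general it is not. Take $n=1$, $E=1$, $A=\lambda_0\neq 0$, $u=1$, $v=0$: then $(sE-A)^{-1}(su+v)=s/(s-\lambda_0)$, so $m_{u,v}(\lambda_0)=1$, $M(\Ac,u,v)=1$ and $\widetilde m_{u,v}(s)=s-\lambda_0$, and the range of your map is $\C\cdot s$, which contains no non-zero constant --- it misses the entire target space. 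The paper avoids this in the linearly dependent case by never multiplying by $su+v$ as a whole: it writes $su+v=(s-\mu)u$, divides the scalar factor $s-\mu$ into the target polynomial (this is where the choice of $\gamma$ and the hypotheses (i)/(ii) are actually used, to make $\widetilde q_\gamma-\widetilde m$ divisible by $s-\mu$), and proves the spanning statement for $w\mapsto w^{*}\widetilde m(s)(sE-A)^{-1}u$, whose range genuinely lies in, and spans, the polynomials of degree at most $M(\Ac,u,v)-1$ (the set \eqref{polmengeherz}). So your blanket surjectivity claim must be replaced by the statement that one specific target polynomial lies in one specific $M(\Ac,u,v)$-dimensional range, and establishing that is precisely the content of the paper's case analysis; it cannot be obtained ``by the same residue argument as in Lemma \ref{surjective}'', because there both $u$ and $v$ were free and here only $w$ is.

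A second concrete gap: hypotheses (i) and (ii) do not make the dependent case uniform. When $v=-\mu u$ with $\mu\in\sigma(\Ac)$ --- which (i) permits, since (i) only restricts the case $\mu\notin\sigma(\Ac)$ --- the factor $s-\mu$ cancels one order of the pole, so the last non-vanishing coordinate of $Su$ in the relevant Jordan block sits at depth $m_{u,v}(\mu)+1$, not $m_{u,v}(\mu)$; your assertion that the definition of $m_{u,v}(\lambda)$ ``forces $(su+v)$ to have a non-vanishing component in the top row of the largest Jordan block of size $m_{u,v}(\lambda)$'' fails exactly here. The paper handles this with a separate construction: the $m_{u,v}(\mu)+1$ functions \eqref{neuebasis2} and a two-step solution of equation \eqref{neuegl} (one coordinate of $w$ reproduces $-\widetilde m(s)$, the remaining ones reproduce $\prod(s-\mu_i)^{m_i}$). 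Your claim that ``under these assumptions the construction proceeds without modification'' skips this sub-case entirely, as well as the degree bookkeeping for the blocks at $\infty$ and, in the linearly independent case, the dichotomy according to whether $s(Su)_m+(Sv)_m$ is proportional to $s-\lambda$.
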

\begin{proof}
For the proof, we essentially repeat the arguments from the proof of Lemma \ref{surjective}. First,  let us assume that $u$ and $v$ are linearly dependent with $u\neq0$ and, hence, $v=-\mu u$ and $\Pc(s)=(s-\mu)uw^*$. 
For $S,T\in\C^{n\times n}$ such that $S\Ac(s) T$ is in Weierstra\ss\ canonical form we write in \eqref{sylv}
\begin{align*}
(1+(s-\mu)w^*(sE-A)^{-1}u)=(1+(s-\mu)w^*T(sSET-SAT)^{-1}Su).
\end{align*}
If $\Ac(s)=sE-A$ has only one eigenvalue $\lambda\neq\infty$ with one Jordan chain at $\lambda$ of length $m_1(\lambda)$ and $m_{u,v}(\lambda)\geq 1$, one can easily write down the result of the multiplication of $(sSET-SAT)^{-1}$ with $Su=((Su)_j)_{j=1}^{m_1(\lambda)}$, cf.\ \eqref{resJ},
\begin{align}
\label{ausmulti}
(sSET-SAT)^{-1}Su=\begin{pmatrix} \sum_{j=k}^{m_1(\lambda)}(-1)^{j-k}(s-\lambda)^{-j+k-1}(Su)_{j}\end{pmatrix}_{k=1}^{m_1(\lambda)}.
\end{align}
Denote by $m\in\N$ with $1\leq m\leq m_1(\lambda)$ the largest index such that  $(Su)_m\neq 0$. Such an  index exists because of $m_{u,v}(\lambda)\geq 1$ and then $(Su)_k=0$ for all $m<k\leq m_1(\lambda)$.
Let us now assume that $\mu\notin\sigma(\Ac)$. 
The assumption $\mu\notin\sigma(\Ac)$ implies that $m_{u,v}(\lambda)$ is not only the order of $\lambda$ as a pole of $s\mapsto (s-\mu)(sE-A)^{-1}u$ but also the order of $\lambda$ as a pole of $s\mapsto (sE-A)^{-1}u$. Therefore  $m=m_{u,v}(\lambda)$ hence $(Su)_{m_{u,v}(\lambda)}\neq0$ and we consider the following functions given by the right hand side of \eqref{ausmulti}
\begin{align}
\label{neuebasis}
p_{\lambda,k}(s):=\sum_{j=k}^{m_{u,v}(\lambda)}(-1)^{j-k}(s-\lambda)^{-j+k-1}(Su)_{j}
\end{align}
for $1\leq k \leq m_{u,v}(\lambda)$. The summand in each $p_{\lambda,k}(s)$ with the pole of the highest order has, as $(Su)_{m_{u,v}(\lambda)}\neq0$, a non-zero coefficient so that \eqref{neuebasis} defines a linearly independent set of functions. 
If $\Ac(s)=sE-A$ has only the eigenvalue at $\infty$ with only one Jordan chain of length $m_1(\infty)$ and $m_{u,v}(\infty)\geq 1$ then the largest index $m\in\N$ such that $(Su)_m\neq0$ holds is $m=m_{u,v}(\infty)$ and the right hand side of \eqref{ausmulti} consists of the functions
\[
p_{\infty,k}(s):=-\sum_{j=k}^{m_{u,v}(\infty)-1}s^{j-k}(Su)_{j+1}
\]
for $0\leq k\leq m_{u,v}(\infty)-1$. 

Since $(sSET-SAT)^{-1}$ has block diagonal structure \eqref{resJ}, similar expressions as in \eqref{ausmulti} occur when $\Ac(s)$ has more than one Jordan chain at $\lambda\in\sigma(\Ac)$ or more than one eigenvalue. Here there exists for all $\lambda\in\sigma(\Ac)$ with $m_{u,v}(\lambda)\geq1$ some block on the right hand side of \eqref{resJ} such that the above construction of the functions  $p_{\lambda,k}(s)$ can be carried out. 
As in the proof of Lemma \ref{surjective} we multiply with a polynomial
\[
\widetilde{m}(s):=\prod_{\lambda\in\sigma(\Ac)\setminus\{\infty\}}(s-\lambda)^{m_{u,v}(\lambda)}
\]
and conclude that the set
\begin{align}
\begin{split}
&~~~\{p_{\lambda,k}(s)\widetilde{m}(s)~|~ \lambda\in\sigma(\Ac)\setminus\{\infty\},\ 1\leq k\leq m_{u,v}(\lambda)\}\\&\cup\{p_{\infty,k}(s)\widetilde{m}(s)~|~ \infty\in\sigma(\Ac),\ 0\leq k\leq m_{u,v}(\infty)-1\} \label{polmengeherz}
\end{split}
\end{align}
is linearly independent and consists of  $\sum_{\lambda\in\sigma(\Ac)}m_{u,v}(\lambda)=M(\Ac,u,v)$ polynomials of degree at most $M(\Ac,u,v)-1$ since we consider only those  $\lambda\in\sigma(\Ac)$ with $m_{u,v}(\lambda)\geq 1$. Therefore it is a basis in the space of polynomials of degree less or equal to $M(\Ac,u,v)-1$.
By (i) we have $\mu\notin\{\mu_1,\ldots,\mu_l\}$ and we consider
\[
\gamma:=\widetilde{m}(\mu)\prod_{\mu_i\in\{\mu_1,\ldots,\mu_l\}\setminus\{\infty\}}(\mu-\mu_i)^{-m_i}
\]
and the polynomial
\begin{align*}
\widetilde{q}_{\gamma}(s):=\gamma \prod_{\mu_i\in\{\mu_1,\ldots,\mu_l\}\setminus\{\infty\}}(s-\mu_i)^{m_i}.
\end{align*}
Since $T$ is invertible and from the linear independence of the polynomials there exists $w\in\C^n$ such that
\begin{align}
\label{division}
\frac{\widetilde{q}_{\gamma}(s)-\widetilde{m}(s)}{s-\mu}=w^*T\widetilde{m}(s)(sSET-SAT)^{-1}Su
\end{align}
holds. Plugging this into \eqref{sylv} proves \eqref{lu} and \eqref{mulres} in the case $\mu\notin\sigma(\Ac)$. 

Let us now assume that $\mu\in\sigma(\Ac)$ and we consider again the case that $\Ac(s)=sE-A$ has only one Jordan chain at $\lambda$ of length $m_1(\lambda)$ and $m_{u,v}(\lambda)\geq1$. This means that $\mu=\lambda$ and the definition of  $m_{u,v}(\lambda)$ implies $m=m_{u,v}(\lambda)+1$ and we consider the following functions given by the right hand side of \eqref{ausmulti}
\begin{align}
\label{neuebasis2}
p_{\lambda,k}(s):=\sum_{j=k}^{m_{u,v}(\lambda)+1}(-1)^{j-k}(s-\lambda)^{-j+k-1}(Su)_{j}
\end{align}
for $1\leq k\leq m_{u,v}(\lambda)+1$. These functions define a linearly independent set. 
As in  the previous sub case, we are looking for a solution $w\in\C^{n}$ with $n=m_{u,v}(\lambda)+1$ and  $(w_k)_{k=1}^n:=w^*T$ of the equation
\begin{align}
\label{neuegl}
\prod_{\mu_i\in\{\mu_1,\ldots,\mu_l\}\setminus\{\infty\}}(s-\mu_i)^{m_i}-\widetilde{m}(s)&=w^*T\widetilde{m}(s)(s-\mu)(sSET-SAT)^{-1}Su\\ &=\sum_{k=1}^{m_{u,v}(\lambda)+1}w_k \widetilde{m}(s)(s-\mu)p_{\lambda,k}(s).\nonumber
\end{align}
If we set $w_1=\ldots=w_{m_{u,v}(\lambda)}=0$ and $w_{m_{u,v}(\lambda)+1}=-((Su)_{m_{u,v}(\lambda)+1})^{-1}$ the right hand side of \eqref{neuegl} is equal to $-\widetilde{m}(s)$. On the other hand, by the linear independence of $\{\widetilde{m}(s)p_{\lambda,k}(s)\}_{k=1}^{m_{u,v}(\lambda)+1}$ in the space of polynomials of degree at most $m_{u,v}(\lambda)=M(\Ac,u,v)$, there is a solution $w\in\C^n$ of the equation 
\[
\prod_{\mu_i\in\{\mu_1,\ldots,\mu_l\}\setminus\{\infty\}}(s-\mu_i)^{m_i}=\sum_{k=1}^{m_{u,v}(\lambda)+1}w_k\widetilde{m}(s)p_{\lambda,k}(s)
\]
This, together with the linearity of equation \eqref{neuegl} in $w$ implies the existence of $w\in\C^n$ such that \eqref{neuegl} holds which proves the assertions in this sub case, that there is only one eigenvalue $\lambda\neq\infty$. The block diagonal structure of $(sSET-SAT)^{-1}$ as in \eqref{resJ} implies the existence of $w\in\C^n$ such that \eqref{lu} and \eqref{mulres} hold in the case when $\Ac(s)$ has more than one Jordan chain at  $\lambda\in\sigma(\Ac)$ or more than one eigenvalue. One only has to replace the functions $p_{\lambda,k}(s
)$ for $\lambda=\mu$ with those defined in \eqref{neuebasis2}.


Next, when $u$ and $v$ are linearly dependent with $u=0$ then we can further assume that $v\neq0$ because $v=0$ implies that $m_{u,v}(\lambda)=0$ for all  $\lambda\in\sigma(\Ac)$ and then \eqref{lu} and \eqref{mulres} hold trivially. This case can be treated by considering the dual pencils $\Ac'(s)$ and $(\Ac+\Pc)'(s)$, because for the dual pencils we are in the case of (i) with $\mu=0$. Therefore, we have already proven that \eqref{lu} and \eqref{mulres} hold for the dual pencils under the assumption that $0\notin\sigma(\Ac')$ implies $0\notin\{\mu_1,\ldots,\mu_l\}$. By Proposition \ref{extralemma} (c) this condition follows from the assumption (ii) that $\infty\notin\sigma(\Ac)$ implies $\infty\notin\{\mu_1,\ldots,\mu_l\}$. Furthermore we see immediately that  $S\Ac'(s)T$ is block diagonal and that $\dim\mathcal{L}_{\lambda}(\Ac)$ for $\lambda\in\sigma(\Ac)\setminus\{0\}$ is equal to $\dim\mathcal{L}_{\lambda^{-1}}(\Ac')$. This proves \eqref{lu} and \eqref{mulres} for $\Ac(s)$ and $(\Ac+\Pc)(s)$.
This finishes the proof of the theorem given that $u$ and $v$ are linearly dependent.



In the case where $u$ and $v$ are linearly independent we have in \eqref{sylv} 
\begin{align*}
(1+w^*(sE-A)^{-1}(su+v))=(1+w^*T(sSET-SAT)^{-1}(sSu+Sv)).
\end{align*}
If $\Ac(s)=sE-A$ has only one eigenvalue $\lambda\neq\infty$ with one Jordan chain of length  $m_1(\lambda)$ and $m_{u,v}(\lambda)\geq 1$ then the product $(sSET-SAT)^{-1}(sSu+Sv)$ is given by 
\begin{align}
\label{ausmulti2}
\begin{pmatrix} \sum_{j=k}^{m_1(\lambda)}(-1)^{j-k}(s-\lambda)^{-j+k-1}(s(Su)_{j}+(Sv)_j)\end{pmatrix}_{k=1}^{m_1(\lambda)}.
\end{align}
This again allows us to define a linearly independent set of polynomials. For this we consider the largest index $m\in\N$ with $1\leq m\leq m_1(\lambda)$ such that $s(Su)_m+(Sv)_m\neq 0$. This implies that $s(Su)_k+(Sv)_k=0$ for all $m<k\leq m_1(\lambda)$ and we obtain the following to cases.
If  $(s(Su)_{m}+(Sv)_{m})$ and $(s-\lambda)$ are linearly independent in $\C[s]$ then we have  $m=m_{u,v}(\lambda)$ and we consider the following entries of \eqref{ausmulti2} 
\[
p_{\lambda,k}(s):=\sum_{j=k}^{m_{u,v}(\lambda)}(-1)^{j-k}(s-\lambda)^{-j+k-1}(s(Su)_{j}+(Sv)_j)
\]
for $1\leq k\leq m_{u,v}(\lambda)$. If $(s(Su)_{m}+(Sv)_{m})$ and $(s-\lambda)$ are linearly dependent then we have $m=m_{u,v}(\lambda)+1$ and define 
\[
p_{\lambda,k}(s):=\sum_{j=k}^{m_{u,v}(\lambda)+1}(-1)^{j-k}(s-\lambda)^{-j+k-2}(s(Su)_{j}+(Sv)_j)
\]
 for $1\leq k\leq m_{u,v}(\lambda)+1$. In both cases we have from the condition $(s(Su)_{m}+(Sv)_{m})\neq 0$ that the functions $p_{\lambda,k}(s)\neq0$ define a linearly independent set. 
If $\Ac(s)$ has only the eigenvalue $\infty$ with one Jordan chain of length $m_1(\infty)$ we consider in the case $(Su)_{m_{u,v}(\infty)}\neq0$
\[
p_{\infty,k}(s):=-\sum_{j=k}^{m_{u,v}(\infty)-1}s^{j-k-1}(s(Su)_{j+1}+(Sv)_{j+1})
\]
for $0\leq k\leq m_{u,v}(\infty)-1$ and if $(Su)_{m_{u,v}(\infty)}=0$
\[
p_{\infty,k}(s):=-\sum_{j=k}^{m_{u,v}(\infty)-1}s^{j-k}(s(Su)_{j+1}+(Sv)_{j+1})
\]
for $0\leq k\leq m_{u,v}(\infty)-1$. If $\Ac(s)$ has more than one eigenvalue and with more than one Jordan chain, then we can use again the block diagonal structure of $(sSET-SAT)^{-1}$ to define the functions $p_{\lambda,k}(s)$ for all $\lambda\in\sigma(\Ac)$. The linear independence of these functions follows from the linear independence of $u$ and $v$.
 
Now the same linear independency arguments from above can be applied. This implies the existence of $w\in\C^n$ satisfying the equation
\[
\prod_{\mu_i\in\{\mu_1,\ldots,\mu_l\}\setminus\{\infty\}}(s-\mu_i)^{m_i}-\widetilde{m}(s)=w^*T\widetilde{m}(s)(sSET-SAT)^{-1}S(su+v).
\]
For real matrices $E$ and $A$ one uses the transformation to the real Weierstra\ss\ canonical form with $S,T\in\R^{n\times n}$ such that for $\lambda=a+ib\in\sigma(\Ac)\setminus\R$ with $b>0$ we use the blocks given by \eqref{realres} to define the polynomials $p_{\lambda,k}(s)$. These polynomials have to be inserted into \eqref{polmengeherz} which gives a linearly independent set. Since the transformation matrices $S,T$ are real, this proves the statement in the real case.
\end{proof}

If $E=I_n$ then
$\sigma(\Ac)$ equals the set $\sigma(A)$ of all eigenvalues of $A$ and $\infty\notin\sigma(\Ac)$ holds. For the perturbation class given for a fixed $v\in\C^n$ by
\[
\Pc(s):=vw^*,\quad w\in\C^n,
\]
Theorem \ref{respert} for $u=0$ leads to the following eigenvalue placement result for matrices which is a generalization of the placement results obtained in \cite{DY07,G73,K92,T76}. 
\begin{corollary}
Let $\Ac(s)=sI_n-A$ be a matrix pencil satisfying Assumption \ref{nijann} with minimal polynomial $m_A(s)$ of the matrix $A\in\C^{n\times n}$ and fixed $v\in\C^n$. Then for any given values $\mu_1,\ldots,\mu_l\in\C$ with $l\leq M(\Ac,0,v)$ and multiplicities $m_i\in\N\setminus\{0\}$ satisfying $\sum_{i=1}^lm_i=M(\Ac,0,v)$ there exists $w\in\C^n$ such that 
\[
\sigma(A+vw^*)=\{\mu_1,\ldots,\mu_l\}\cup\{\lambda\in\sigma(A) ~|~ \dim \ker (\lambda I_n-A)\geq 2,\ m_{0,v}(\lambda)<m_1(\lambda)\}
\]
and \eqref{mulres} holds. If $v\in\C^n$ is not fixed, then one can always choose $v$ such that $m_{0,v}(\lambda)=m_1(\lambda)$ holds for all $\lambda\in\sigma(A)$ implying that $M(\Ac,0,v)=\deg m_A$. For this choice of $v$ there exists $w\in\C^n$ such that 
\[
\sigma(A+vw^*)=\{\mu_1,\ldots,\mu_l\}\cup\{\lambda\in\sigma(A) ~|~ \dim \ker (\lambda I_n-A)\geq 2\}.
\]
If $E,A$ are real matrices then $u,v$ can be chosen in $\R^n$.
\end{corollary}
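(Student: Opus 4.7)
The plan is to derive this as an immediate specialization of Theorem \ref{respert} to the matrix case $E = I_n$ with $u = 0$ in the parameter-restricted perturbation class. Since $E = I_n$ is invertible one has $\infty \notin \sigma(\Ac)$, so the only remaining hypothesis from case (ii) of Theorem \ref{respert} is that $\infty \notin \{\mu_1,\ldots,\mu_l\}$, and this is automatic because the $\mu_i$ are chosen in $\C$. Moreover, $(\Ac + \Pc)(s) = sI_n - (A - vw^*)$ is regular for every $w$. After absorbing a sign into $w$, the first displayed identity of the corollary and the multiplicity formula are a direct translation of \eqref{lu} and \eqref{mulres}.

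For the second claim, where $v$ is free, I would construct $v$ explicitly from the Jordan canonical form $SAS^{-1} = J$, choosing the transformation so that the blocks comply with Assumption \ref{nijann} (the largest block first at each eigenvalue). Inspecting the block-diagonal resolvent \eqref{resJ}, the $(1, m_1(\lambda))$-entry of $(sI - J_{m_1(\lambda)}(\lambda))^{-1}$ is the unique entry of that block carrying the maximal-order pole $(s-\lambda)^{-m_1(\lambda)}$. Hence, taking $\widetilde{v} \in \C^n$ with a nonzero entry at each position corresponding to the last basis vector of the largest Jordan block at $\lambda$ for every $\lambda \in \sigma(A)$, and setting $v := S^{-1}\widetilde{v}$, the vector-valued map $s \mapsto (sI_n - A)^{-1}v$ has a pole of order exactly $m_1(\lambda)$ at each such $\lambda$; therefore $m_{0,v}(\lambda) = m_1(\lambda)$ for all $\lambda \in \sigma(A)$. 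Consequently $M(\Ac,0,v) = \sum_{\lambda \in \sigma(A)} m_1(\lambda) = \deg m_A$, and the condition $m_{0,v}(\lambda) < m_1(\lambda)$ is vacuous in the union appearing in \eqref{lu}, yielding the second displayed identity.

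The real case is handled verbatim after replacing the complex Jordan form by the real Jordan canonical form (as used in the proof of Lemma \ref{surjective}): for each real eigenvalue one picks the last basis vector of the largest real Jordan block, and for each conjugate pair $\lambda, \overline{\lambda}$ with $\im\lambda > 0$ one picks the final pair of real basis vectors of $J^{\R}_{m_1(\lambda)}(\lambda)$; the real assertions of Theorem \ref{respert} then supply a real $w$. The only step that requires care, and hence is the main (though mild) obstacle, is verifying that the chosen $v$ realizes the maximum pole order at every eigenvalue simultaneously. Here the block-diagonal structure of the resolvent together with Assumption \ref{nijann} is decisive: it guarantees that the pole of order $m_1(\lambda)$ appears in a single coordinate of $(sI_n - A)^{-1}v$ with a nonzero coefficient that cannot be cancelled by contributions from other blocks.
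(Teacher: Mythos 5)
Your proposal is correct and follows essentially the same route as the paper: the corollary is obtained by specializing Theorem \ref{respert} to $E=I_n$ and $u=0$ (case (ii), with the hypothesis on $\infty$ automatically satisfied since $I_n$ is invertible and the $\mu_i$ lie in $\C$), absorbing a sign into $w$ to pass from $A-vw^*$ to $A+vw^*$. Your explicit construction of $v$ from the Jordan form—placing a nonzero entry at the last position of the largest block at each eigenvalue so that $(sI_n-A)^{-1}v$ attains the maximal pole order $m_1(\lambda)$ everywhere, whence $M(\Ac,0,v)=\deg m_A$—correctly fills in the existence claim that the paper leaves implicit, and the real case via the real Jordan form is handled as in Lemma \ref{surjective}.
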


\section{Application to single input differential-algebraic equations with feedback}
\label{Sapl}
Parameter restricted perturbations of the form \eqref{pertclass} occur naturally in the study of differential algebraic equations with a single input given by $E,A\in\C^{n\times n}$, $b\in\C^n$, $x_0\in\C^n$ and the equation
\begin{align}
\label{Dae}
\frac{d}{dt}Ex(t)=Ax(t)+bu(t),\quad t\in[0,\infty),\quad x(0)=x_0.
\end{align}
For this equation we consider a state feedback of the form $u(t)=f^*x(t)$ with $f\in\C^n$.
It is well known that the solution of the closed loop-system
\begin{align*}
\frac{d}{dt}Ex(t)=(A+bf^*)x(t),\quad t\in[0,\infty),\quad x(0)=x_0
\end{align*}
can be expressed with the eigenvalues and Jordan chains of the matrix pencil $sE-(A+bf^*)$, see \cite{BIT12}. This pencil can be written as a perturbation of $sE-A$ with the rank one pencil $\Pc(s)=-bf^*$.
By fixing $u=0$ and $v=-b$ in \eqref{pertclass} we can write
\[
\Pc(s)=-bf^*=(su+v)f^*.
\]
For $E$ singular we have $\infty\in\sigma(\Ac)$ and for $E$ invertible we have $\infty\notin\sigma(\Ac)$.

In \cite{BR13} it was shown that a system given by $(E,A,b)$ with $sE-A$ regular is controllable in the behavioural sense if and only if the Hautus condition
\[
\rk[\lambda E-A,b]=n,\quad \text{for all $\lambda\in\C$}
\]
holds. A transformation to Weierstra\ss\ canonical form \eqref{wnf} reveals that this Hautus condition is equivalent to $\dim\ker\Ac(\lambda)=1$ for all $\lambda\in\C\cap\sigma(\Ac)$ and
\[
m_{0,-b}(\lambda)=m_1(\lambda)=\dim\mathcal{L}_{\lambda}(\Ac),\quad \text{for all $\lambda\in\C\cap\sigma(\Ac)$.}
\]
Hence Theorem \ref{respert} implies that all finite eigenvalues can by placed arbitrarily in $\C$.

The feedback placement problem for regular matrix pencils was studied in \cite{C81,L86,LO85,M93}.
In the following, as in \cite{LO85}, we do not assume controllability.
Then Theorem \ref{respert} implies the following.
\begin{theorem}
Let $(E,A,b)$ be a system given by \eqref{Dae} such that $\Ac(s)=sE-A$ is a regular matrix pencil  satisfying Assumption \ref{nijann}. 
Choose pairwise distinct numbers $\mu_1,\ldots,\mu_l\in\overline{\C}$ with $l\leq M(\Ac,0,-b)$. Choose multiplicities $m_1,\ldots,m_l\in\N\setminus\{0\}$ with  $\sum_{i=1}^lm_i=M(\Ac,0,-b)$. Then:
\begin{itemize}
\item[\rm (a)] For $E$ singular there exists a feedback vector $f\in\C^n$, such that $sE-(A+bf^*)$ is regular and $\sigma(sE-(A+bf^*))$ equals to
\begin{align}
\label{ende}
\{\mu_1,\ldots,\mu_l\}\cup\{\lambda\in\sigma(\Ac) ~|~ \dim\ker\Ac(\lambda)\geq 2\ \text{or}\ m_{0,-b}(\lambda)< m_1(\lambda)\}.
\end{align}
\item[\rm (b)] For $E$ invertible and $\infty\notin\{\mu_1,\ldots,\mu_l\}$ there exists a feedback vector $f\in\C^n$ such that $sE-(A+bf^*)$ is regular and $\sigma(sE-(A+bf^*))$ equals \eqref{ende}.
\end{itemize}
In both cases {\rm (a)} and {\rm (b)} the
 dimensions of the root subspaces are given by formula \eqref{mulres}. If $E,A$ are real matrices and $b\in\R^n$ then $f$ can be chosen in $\R^n$.
\end{theorem}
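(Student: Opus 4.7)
The plan is to recognize the closed-loop pencil $sE-(A+bf^*)$ as a parameter-restricted rank one perturbation of $\Ac(s)=sE-A$ to which Theorem \ref{respert} applies directly. Concretely, I would write
\[
\Pc(s) := -bf^* = (s\cdot 0 + (-b))f^*,
\]
identifying $u=0$, $v=-b$, $w=f$ in the notation of Theorem \ref{respert}. Then $(\Ac+\Pc)(s) = sE - (A+bf^*)$, and $M(\Ac,u,v)$ specializes to $M(\Ac,0,-b)$, so the cardinality and multiplicity constraints on $\mu_1,\ldots,\mu_l$ in the statement are exactly the ones required by Theorem \ref{respert}.

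Next I would check the hypotheses of Theorem \ref{respert}. If $b=0$, then $m_{0,-b}(\lambda)=0$ for all $\lambda\in\sigma(\Ac)$, hence $M(\Ac,0,-b)=0$, forcing $l=0$; the assertion is then trivial with $f=0$. Assume $b\neq 0$. Because $u=0$ and $v=-b\neq 0$, the vectors $u,v$ are linearly dependent, so the relevant additional hypothesis in Theorem \ref{respert} is (ii): whenever $\infty\notin\sigma(\Ac)$ one must have $\infty\notin\{\mu_1,\ldots,\mu_l\}$. In case (a), $E$ is singular, so $\infty\in\sigma(\Ac)$ and the implication is vacuous, placing no restriction on the target spectrum. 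In case (b), $E$ is invertible, so $\infty\notin\sigma(\Ac)$, and the standing assumption $\infty\notin\{\mu_1,\ldots,\mu_l\}$ is exactly what (ii) demands. Thus in both situations all hypotheses of Theorem \ref{respert} are fulfilled.

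Applying Theorem \ref{respert} then yields a vector $w=f\in\C^n$ such that $sE-(A+bf^*)$ is regular with spectrum equal to
\[
\{\mu_1,\ldots,\mu_l\}\cup\{\lambda\in\sigma(\Ac)\mid \dim\ker\Ac(\lambda)\geq 2\ \text{or}\ m_{0,-b}(\lambda)<m_1(\lambda)\},
\]
which is precisely the set in \eqref{ende}, and with root subspace dimensions given by \eqref{mulres}. For the real case, if $E,A$ are real and $b\in\R^n$, the corresponding real statement in Theorem \ref{respert} delivers $f\in\R^n$ with the same conclusion.

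The main obstacle is essentially bookkeeping rather than analysis: Theorem \ref{respert} does all the heavy lifting, and the only nontrivial point to verify is that the dichotomy between cases (a) and (b) aligns exactly with whether or not $\infty\in\sigma(\Ac)$, i.e.\ whether hypothesis (ii) of Theorem \ref{respert} is vacuous or must be imposed. The trivial case $b=0$ needs to be acknowledged separately to make sure the reduction to Theorem \ref{respert} is valid throughout.
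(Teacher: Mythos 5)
Your proposal is correct and follows exactly the route the paper takes: the paper itself derives this theorem by setting $u=0$, $v=-b$, $w=f$ in the perturbation class \eqref{pertclass} and invoking Theorem \ref{respert}, with the case split (a)/(b) corresponding precisely to whether hypothesis (ii) of that theorem is vacuous ($E$ singular, $\infty\in\sigma(\Ac)$) or must be imposed ($E$ invertible). Your separate treatment of $b=0$ is a harmless extra check that mirrors the reduction already made inside the proof of Theorem \ref{respert}.
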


\section*{Acknowledgments}
The authors wish to thank the anonymous referees for their careful reading and for valuable comments which improved the quality of the manuscript. The authors also thank A.C.M.\  Ran for pointing out useful references.

 \nocite{*}

\bibliographystyle{amsplain}

\end{document}